\documentclass[reqno]{amsart}
\usepackage[utf8]{inputenc}
\usepackage[margin=1in]{geometry}
\usepackage[usenames]{color}
\usepackage{amsmath,amsthm,pdfsync,verbatim,graphicx,epstopdf,enumerate,amssymb}
\usepackage[colorlinks=true]{hyperref}
\usepackage{cancel}
\usepackage[framemethod=tikz]{mdframed}

\hypersetup{allcolors=blue}
\numberwithin{equation}{section}

\newcommand{\lb}{\left(}

\newcommand{\rb}{\right)}
\newcommand{\PD}{\partial}

\newcommand{\Beq}{\begin{equation}}
	\newcommand{\Eeq}{\end{equation}}
\newcommand{\beq}{\begin{equation*}}
	\newcommand{\eeq}{\end{equation*}}
\newcommand{\bal}{\begin{align}}
	\newcommand{\eal}{\end{align}}

\newcommand{\n}{\nabla}

\usepackage{mathtools}

\newcommand{\bp}{\begin{prob}}
	\newcommand{\ep}{\end{prob}}
\newcommand{\bpr}{\begin{proof}}
	\newcommand{\epr}{\end{proof}}
\renewcommand{\o}{\omega}

\newcommand{\tred}[1]{{\color{red}{#1}}}

\newcommand{\bel}[1]{\begin{equation}\label{#1}}
	\newcommand{\ee}{\end{equation}}

\newcommand{\rthree}{\mathbb{R}^3}
\newcommand{\zz}{\mathbb{Z}}

\newtheorem{theorem}{Theorem}[section]

\newtheorem{example}[theorem]{Example}

\theoremstyle{definition}
\newtheorem{definition}[theorem]{Definition}

\newtheorem{remark}[theorem]{Remark}

\newcommand{\R}{\mathbb{R}}
\newcommand{\D}{\mathrm{d}}

\newcommand{\Rb}{\mathbb{R}}
\newcommand{\Dc}{\mathcal{D}}

\renewcommand{\th}{^\text{th}}

\usepackage[usenames]{color}
\usepackage{amsmath,pdfsync,verbatim,graphicx,epstopdf,enumerate}

\newcommand{\Ec}{\mathcal{E}}

\newcommand{\Rc}{\mathcal{R}}

\renewcommand{\o}{\omega}

\newcommand{\paren}[1]{\left(#1\right)}

\newcommand{\sparen}[1]{\left\{#1\right\}}

\newcommand{\vw}{w}
\newcommand{\vx}{x}
\newcommand{\vy}{y'}
\newcommand{\vz}{z}
\newcommand{\vs}{s}
\newcommand{\vr}{r}
\newcommand{\vb}{b}

\newcommand{\st}{\,:\,}
\newcommand{\rr}{\mathbb{R}} 
\newcommand{\rn}{\mathbb{R}^n} 
\newcommand{\rnm}{\mathbb{R}^{n-1}}

\newcommand{\zero}{0}

\newcommand{\hess}{\operatorname{Hess}}

\newcommand{\vxt}{{\vx_T}}
\newcommand{\vxtt}{{\vx^T_T}}
\newcommand{\diag}{\operatorname{Diag}}

\title{Microlocal analysis of  Radon transforms over quadric surfaces}
\author[Ambartsoumian, Felea, Krishnan, Nolan and Quinto]{Gaik Ambartsoumian$^{\dagger}$, Raluca Felea$^{\diamond}$, Venkateswaran P.\ Krishnan$^{\flat}$, Clifford J.\ Nolan$^{\sharp}$ and Eric Todd\ Quinto$^{\ast}$}
\address{$^{\dagger}$Department of Mathematics, University of Texas at Arlington, USA E-mail: {\tt gambarts@uta.edu}
\newline
\indent $^{\diamond}$School of Mathematics and Statistics, Rochester Institute of Technology, USA E-mail: {\tt rxfsma@rit.edu}
\newline
\indent $^{\flat}$Tata Institute of Fundamental Research - Centre for Applicable Mathematics, Bangalore, India
\newline
\indent\: E-mail: {\tt vkrishnan@tifrbng.res.in}
\newline
\indent $^{\sharp}$Department of Mathematics and Statistics, University of Limerick, Ireland E-mail: {\tt clifford.nolan@ul.ie}
\newline
\indent $^{\ast}$Department of Mathematics, Tufts University, USA, E-mail: {\tt todd.quinto@tufts.edu}}
\begin{document}

\maketitle

\begin{abstract}
    We study the microlocal properties of generalized Radon transforms over a family of quadric hypersurfaces whose centers lie on an orientable hypersurface $S$. The quadric surfaces we consider are level sets of the quadratic form associated to a symmetric, invertible matrix $A$, 
    with real entries.  
    We study the singularities of the right and left projections of the canonical relation associated with these operators and  show that  they are determined by the signature of the matrix $A$ and the hypersurface $S$. If the matrix is positive/negative definite (i.e., the surface of integration is an ellipsoid) and $S$ is strictly convex, we prove that the singularities are folds. If the matrix is indefinite (i.e., the surface of integration is a hyperboloid-type quadric) and $S$ is either strictly convex or a cylinder, then cusp, fold, or blowdown singularities are present. We also study the case when the surface of integration is a paraboloid and show that the Bolker condition is satisfied.
\end{abstract}
\section{Introduction}\label{sec:Intro}
We investigate the microlocal properties of some generalized Radon transforms that map a scalar function in $\mathbb{R}^n$ to an $n$-dimensional set of its integrals over quadric hypersurfaces. Such transforms play a significant role in various applications related to imaging, remote sensing, security screening, and industrial non-destructive testing. Transforms integrating over hyperspheres arise in mathematical models of thermo- and photo-acoustic tomography \cite{kuch-kun-tat-pat, xu2005universal}, mono-static setup of ultrasound reflection tomography \cite{amb-kun, norton-2D, norton-3D}, radar and sonar \cite{cheney-radar, louis-quinto-sonar}. Bistatic configurations of these modalities lead naturally to elliptical and ellipsoidal Radon transforms \cite{ABKQ-ultrasound, AFKNQ-SAR-I, AFKNQ-SAR-II}. Hyperbolic and parabolic Radon transforms are widely used in seismic data processing and geophysics \cite{chen2021fast, gu2009radon, hu2013fast, kaur2020separating, MA2025163, schonewille2001parabolic}. 

The full family of each such hypersurface possesses more than $n$ degrees of freedom. Consequently, studies of the associated generalized Radon transforms often diverge depending on the choice of an $n$-dimensional subfamily of integration surfaces. For instance, a couple of variants of the spherical Radon transform in $\mathbb{R}^n$ have been investigated, including transforms integrating over all hyperspheres passing through a prescribed point \cite{Cormack-Quinto, quinto1982null}, or having centers constrained to lie on another hypersurface (e.g. see \cite{ambartsoumian2010inversion, ambartsoumian2018image, aramyan2024recovering, FHR-even, FRP-odd, haltmeier2014universal, haltmeier2017spherical, kunyansky2007explicit, norton-3D, xu2005universal} and the references therein). The latter type of restriction arises frequently in applications, since the centers of the integration surfaces typically represent the locations of emitters and/or receivers of measurement signals.

The central problems in the study of all generalized Radon transforms concern their injectivity, inversion, stability, and range characterizations. Most of these questions have been addressed for spherical transforms in a limited number of specific data-acquisition geometries, for example, when the centers of the integration surfaces lie on planes, spheres, or cylinders (see the references above, as well as \cite{agranovsky2007range, AQ, agrawal2023simple, agrawal2025simple, ambartsoumian2005injectivity, ambartsoumian2006range} and the references therein). In contrast, transforms integrating over other quadric surfaces remain comparatively under-explored. Deriving explicit closed-form inversion formulas for such operators appears to be difficult. A more attainable goal is to obtain approximate inversion formulas based on suitably filtered versions of the corresponding normal operator. Achieving this, however, requires a detailed understanding of the microlocal properties of the generalized Radon transforms under consideration. A few relevant studies have been carried out by various authors, but only in a limited number of special cases \cite{ABKQ-ultrasound, AFKNQ-SAR-I, AFKNQ-SAR-II, felea2013microlocal, FKNQ-common, FN:fold/cusp,  grathwohl2018microlocal, grathwohl2020imaging, KLQ-microloc-ell, nguyen2019microlocal, stefanov2013curved, WHQ}.

The goal of this paper is to undertake a systematic analysis of the microlocal properties of generalized Radon transforms that integrate a compactly supported function $f$ in $\mathbb{R}^n$ over quadric hypersurfaces whose centers lie on a prescribed hypersurface $S$. The quadric surfaces of integration are defined via an $n\times n$ symmetric, invertible matrix $A$ with real entries.  We study the singularities of the right and left projections of the canonical relation associated with these operators. In general, a canonical relation describes the relation between the singularities of the data and the singularities of the image.  We show that the hypersurface $S$ and the signature of the matrix $A$ determine the singularities of the projections (i.e., folds, cusps, or blowdowns). If the matrix is positive or negative definite (i.e., the surface of integration is an ellipsoid) and $S$ is strictly convex, we prove that the singularities are folds. If the matrix is indefinite (i.e., the surface of integration is a hyperboloid-type quadric) and $S$ is either strictly convex or a cylinder, then cusp, fold, or blowdown singularities are present. 

We also study the case when the surface of integration is a paraboloid and show that in that case, the Bolker condition is satisfied. Our study takes its initial motivation from the paper \cite{WHQ} in which several of our transforms are studied on restricted domains for which the canonical left and right projections from the canonical relation do not drop rank; thus, no added singularities/artifacts are present in the normal operator.

The rest of the article is organized as follows. In Section \ref{sec:Prelim} we recall the necessary definitions and introduce the notation used throughout the paper. Section 
\ref{sec:A} introduces the generalized Radon transform studied here and describes the associated canonical relation. In Section \ref{sec:definite} we consider transforms corresponding to positive or negative definite matrices $A$. Section \ref{sec:evals-mixed-sign} addresses the case in which $A$ has eigenvalues of mixed sign. In Section \ref{sec:cylindrical-geom} we analyze the transforms in the cylindrical geometry of data acquisition. Section \ref{sec:paraboloids} treats paraboloids and more general surfaces of integration. Section \ref{sec:normal-operator} focuses on the associated normal operators. Finally, Section \ref{sec:acknowledgements} contains our acknowledgements.

%%%%%%%%%%%%%%%%%%%%%%%%%%%%%%%%%%%%%%%%%%%%%%%

\section{Preliminaries} \label{sec:Prelim}

In this section, we define the types of singularities that come up when analyzing our transforms, and we define some of the basic terms.

 Let $f$ be a smooth function $f : {\mathbb{R}}^n \to {\mathbb{R}}^n$.  
 
  Let $S_1(f)$ be the following smooth hypersurface:
\[S_1 (f) =\{ p \in {\mathbb{R}}^n: \det(\D f_p)=0, \ \D (\det (\D f))_p\neq 0 \}.
 \] 
 We say that the derivative of $f$ drop ranks simply by 1 at points $p\in S_1(f)$.

\begin{definition}\cite{golubitsky.guillemin.73} The function $f$ has a  fold  singularity along $ S_1(f)$  if for all $p\in S_1(f)$,    $\ker(\D f_p)$ is transversal to $T_p S_1(f)$.
\end{definition}
Therefore, if $f$ has a fold singularity along $S_1(f)$, we may choose local coordinates such that  there exists a smooth $h:\mathbb R^{n}\to\mathbb R$, with $f(x_1, x_2, \dots, x_n)=(x_1, x_2, \dots, x_{n-1},h(x))$ and we have the following local description of $S_1(f)$:
\[ 
S_1(f)=\left\{(x_1,\dots,x_n)\in\mathbb R^n: \frac{\partial h}{\partial x_n} (x)=0, \ \D \left(\frac{\partial h}{\partial x_n} \right) \lb x\rb\neq 0\right\}.
\]

The fold condition implies $\frac{\partial^2 h}{\partial^2 x_n}  \lb x\rb\neq 0 $.

Any map with fold singularities can be put in the local normal form   $f(x_1,x_2, \dots, x_n)=(x_1, x_2, \dots, x_n^2)$ \cite{golubitsky.guillemin.73}.

\begin{definition}\cite{golubitsky.guillemin.73} The function $f$ has a  blowdown  singularity along $ S_1(f)$  if for all $p\in S_1(f)$,    $\ker (\D f_p)\subset T_p S_1(f)$.
\end{definition}
Any map with blowdown singularities can be put in the normal form \cite{golubitsky.guillemin.73}:   
\[
f(x_1,x_2, \dots, x_n)=(x_1, x_2, \dots, x_{n-1},x_{n-1}x_n).
\]
Next, we define cusp singularities. We consider a point $p\in S_1(f)$ where  $\ker(\D f)_p \subset T_pS_1(f)$. Consider a vector field $v$ along $S_1(f)$ such that $v \in  \ker(\D f)$ at all points on $S_1(f)$. Let $K$ be a smooth function such that $K|_{S_1(f)}=0$ and $\D K|_{S_1(f)} \neq 0$. Then, since $\D K$ annihilates vectors tangent to $S_1(f)$, the function $v(K)=\D K(v)$ has a zero at $p$.

\begin{definition} \cite{golubitsky.guillemin.73} The function $f$  has a cusp singularity at $p$ if  $\D K(v)$ has a simple zero at $p$.
\end{definition}
Without loss of generality, assume that $p=0$ is a cusp point. Using suitable coordinates defined near $p=0$, we may write $f(x_1, x_2, \dots, x_n)=(x_1, x_2, \dots x_{n-1}, h(x))$, with $h(0)=0$, and locally, $S_1(f)=\{(x_1,\dots,x_n)\in\mathbb R^n: \frac{\partial h}{\partial x_n} (x)=0, \D( \frac{\partial h}{\partial x_n}) (x) \neq 0\}$. We say that $f$ has a cusp singularity at $p=0$ if $\frac{\partial^2 h}{\partial x_n^2} (0)=0, \ \ \frac{\partial^3 h}{\partial x_n^3} (0) \neq 0$  and the matrix $\left[ \D (\frac{\partial h}{\partial x_n})(0)\ \ \D (\frac{\partial^2 h}{\partial x_n^2})(0) \right]$ 
has rank $2$. More generally, if $\ker \D  f=\frac{\partial}{\partial x_n}$ is tangent simply to $S_1(f)$ along
\[
S_{1,1}(f)=\Big{\{} x: \frac{\partial h}{\partial x_n} (x)=\frac{\partial^2 h}{\partial x_n^2} (x)=0  
\ \mbox{and}\ \D  \lb \frac{\partial h}{\partial x_n}\rb (x), \D  \lb \frac{\partial^2 h}{\partial x_n^2}\rb (x),\ \mbox{are linearly independent}
\Big{\}},
\]
then $S_{1,1}(f)$ is a codimension-$2$ submanifold, called the cusp set of $f$. Any map with a cusp singularity can be put into a local normal
form \cite{golubitsky.guillemin.73}: 
\[
f(x_1,x_2, \dots, x_n)=(x_1,x_2, \dots, x_{n-1}, x_{n-1}x_n+x_n^3).
\]

\begin{definition}
 Let $A$ be an invertible symmetric  $n \times n$ matrix. We say $A$ 
  is positive definite if all the eigenvalues of $A$ are positive and negative definite if all the eigenvalues of $A$ are negative. When $A$ has both positive and negative eigenvalues, $A$ is indefinite. 
\end{definition}

\begin{definition}
An orientable hypersurface $S$ is called strictly convex if its second fundamental form is definite at every point of $S$ (with respect to a chosen orientation), i.e., its principal curvatures are strictly positive (or negative) at every point (e.g., see \cite{schneider1972closed}).
\end{definition}

\begin{remark}
   Let \( S = \{(x,q(x))\} \) be the graph of a \( C^2 \) function. The matrix \( \mathrm{Hess}(q) \) is everywhere positive (or negative) definite if and only if \( S \) is a locally strictly convex hypersurface (up to orientation); see, e.g., \cite{doCarmoDG} or \cite{GilbargTrudinger}.
\end{remark}
\par Let $X$ and $Y$ be open subsets of $\rr^{n_X}$ and $\rr^{n_Y}$ respectively,
and let $\varphi$ be a nondegenerate phase function \cite{D} on $X \times
Y\times (\R^N\setminus 0)$, for some $N\ge 1$. Let $a$ be an amplitude in
$S^{m}_{1,0}$ \cite{D}. 
\begin{definition} \cite{Ho1971}
A Fourier integral operator (FIO) is a continuous linear operator $F:\mathcal E'(Y)\to \mathcal D'(X)$ with the kernel 
$K_F(x,y)=\int_{\R^n} e^{i\varphi(x,y;\theta)} a(x,y;\theta)\, \D \theta$.
\end{definition}
The  order of $F$ is $\mu:=m+\frac{2N-n_X-n_Y}4$,
and   the canonical relation of $F$ is
$$C=\big\{ (x,\D_x\varphi; y, -\D_y\varphi)\, : \, (x,y;\theta)\in \mbox{supp}(a),\, \D_{\theta}\varphi(x,y;\theta)=0\,\big\}\subset (T^*X\setminus 0)\times (T^*Y\setminus 0).$$
We define the canonical left and right projections from $C$ to be 
$\pi_R: C \rightarrow  T^*X, \ \pi_R(x, y,
\theta)=(y,-\D_y\varphi)$ and $\pi_L: C \rightarrow  T^*Y, \  \pi_L(x, y, \theta)=(x,\D_x\varphi)$. We say that a canonical relation $C\subset T^*X\times T^*Y$ satisfies the  Bolker condition if  $\pi_L$ is an injective immersion.

\section{The canonical relation of $\Rc$}  \label{sec:A}

In this section, we provide the basic notation for our transform.  After defining the transform, we calculate its canonical relation and some basic terms.

In the discussion below, we mainly follow the notation from \cite{WHQ}. Let $S$ be a smooth connected hypersurface in $\Rb^n$, representing the centers of integration surfaces of the Radon transforms considered in this paper.  We assume throughout the article that $S$ is a surface parameterized by the smooth function $q:\Omega\to\rr$ for some open $\Omega$ in $\rnm$: \bel{param q} y_n = q(y_1,\dots,y_{n-1}),\ \ \vy=(y_1,\cdots,y_{n-1})\in \Omega.\ee  However, one can often interchange coordinates to locally parameterize a general manifold $S$ by \eqref{param q}, and many of our theorems will be valid in this case.

We let $A$ be an $n\times n$ symmetric, invertible matrix with real entries. We use $A$ to define certain quadric surfaces of integration and assume that the entries of $A$ are constants. In some cases, we assume that $A$ is a diagonal matrix, and in many cases, we will generalize to the case when $A$ is not diagonal. We also adopt the following convention: \[\vx_T=\vx-\vs,\ \ \text{where}\ \ \vs=(\vy,q(\vy))\in S, \ \ \text{for}\ \ \vy\in \Omega,\] and define the function 
 \bel{def:psi} \begin{aligned}\psi(\vy, t,\vx,\omega)&=\o(t-\vx_T^TA\vx_T), \;\;t>0.\\
&=\omega\left(t-(\vx'-\vy, x_n-q(\vy))A(\vx'-\vy, x_n-q(\vy))^T\right).\end{aligned}
\ee where $\vx' = (x_1,\dots, x_{n-1})$ and $\vy \in \Omega$. We suppress the dependence of $\psi$ on $A$ since we assume $A$ is fixed in each theorem. 

The generalized Radon transform that we study is
\bel{def:R}
\Rc f(y',t)=\int\limits_{\Rb} \int\limits_{\Rb^n} |\nabla_\vx \psi|f(\vx) e^{i  \psi(y', t, \vx, \omega) } \D \vx\, \D \omega.
\ee
We assume that $S$ does not intersect the support of $f$. For each
$(\vy,t)\in \Omega\times (0,\infty)$, $\Rc f(\vy,t)$ integrates $f$ over the set $\vxtt A\vxt = t$, which is a level set of the homogeneous second order polynomial $\vxtt A\vxt$. 

 The canonical relation associated to $\Rc$ is then given by
\[
C=\Big{\{} (\vy, \vx_T^T A \vx_T,  \D_{y'} \psi, \omega; \vx, -\D_\vx \psi) : \psi_{\omega}(y',t,x,\o)=0 \Big{\}}.
\]
The variables $(\vy,\vx,\o)$ parametrize the conic Lagrangian submanifold $C$, based on the definition above. 

Let us now compute the terms $\D_{y'} \psi$ and $\D_\vx \psi$ separately. A straightforward computation gives
\[
\D^{T}_{y'} \psi= 2 \o \vx_T^TAB^T
\]
with $B$ being an $(n-1)\times n$ matrix given by 
\[
B=
\begin{pmatrix}
    1 & 0 &\cdots & 0 & \PD_{y_1}q\\
    \vdots&\vdots & \ddots &\vdots & \vdots \\
    0 & 0&\cdots & 1&\PD_{y_{n-1}}q
\end{pmatrix}.
\]
Similarly, another straightforward computation gives
\[
\D_\vx \psi =-2 \o A(\vx'-\vy, x_n-q(\vy))=-2\omega\vx_T^TA.
\]
Now that we have the canonical relation $C$ associated to $\Rc$, let us
study the canonical left ($\pi_L$) and right ($\pi_R$) projections to
the respective cotangent bundles.  We will show that the
projections drop rank simply by $1$ on a codimension 1 submanifold $\Sigma$; see \eqref{Eq}. We will prove that the singularities of the projections are determined by the signature of $A$ and the choice of $S$. We distinguish several cases described in sections \ref{sec:definite}-\ref{sec:paraboloids}. 

\medskip

Now that we have established the preliminaries, we will analyze the transform $\Rc$  when the matrix $A$  and $S$ have different properties.

\section{Case when $A$ is positive definite and $S$ is a smooth strictly convex hypersurface}\label{sec:definite}

\begin{theorem}\label{thm:pos def}  Let  $\Rc: \Ec'(\rr^n\setminus S) \rightarrow \Dc'(\Omega
\times(0,\infty))$ be the Radon transform defined by \eqref{def:psi} and \eqref{def:R}. Assume  $A$ is an invertible positive definite matrix, and
Hess($q$) is definite (either positive or negative definite), then $\Rc$ is an FIO  associated to a two-sided fold canonical relation.
\end{theorem}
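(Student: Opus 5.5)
Our plan is to work in the global coordinates $(\vy,\vx,\o)$ on $C$ and compute $\pi_L$ and $\pi_R$ explicitly. First, $\psi$ is a nondegenerate phase: $\D_\vx\psi=-2\o\vx_T^TA\neq 0$ since $\vx\notin S$ and $A$ is invertible. Hence $\Rc$ is an FIO and $C$ is a local canonical graph wherever $\pi_L$ (equivalently $\pi_R$) has nonsingular differential. Since $\dim C=\dim T^*\rn=2n$, both projections drop rank on the same set $\Sigma=\{\det \D\pi_L=0\}$. To prove a two-sided fold we must show three things: $\Sigma$ is a smooth hypersurface, the rank drops by exactly $1$ there, and the kernel of each projection is transversal to $\Sigma$.

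We start with $\pi_R(\vy,\vx,\o)=(\vx,2\o A\vx_T)$. We fix $\vx$ and view $\pi_R$ as the map $(\vy,\o)\mapsto \xi=2\o A\vx_T$. Writing $\vs=(\vy,q(\vy))$, we get $\partial_{y_j}\xi=-2\o A\,\partial_{y_j}\vs$ and $\partial_\o\xi=2A\vx_T$. Since $A$ is invertible, $\D\pi_R$ is singular exactly when $\vx_T$ lies in the span of the tangent vectors $\partial_{y_j}\vs$, that is, when $\vx_T\cdot\nu(\vy)=0$, where $\nu=(-\n q,1)$ is the normal to $S$. So
\[
\Sigma=\{(\vy,\vx,\o): x_n-q(\vy)-(\vx'-\vy)\cdot\n q(\vy)=0\}.
\]
Its differential in $\vx$ is $\nu\neq 0$, so $\Sigma$ is a smooth hypersurface, and the rank drops by exactly one because the columns $\partial_{y_j}\vs$ are independent. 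The kernel of $\D\pi_R$ on $\Sigma$ is spanned by $V=\sum a_j\partial_{y_j}+b\,\partial_\o$, where $\o\,B^Ta=b\,\vx_T$; here $B^Ta$ is the tangent vector equal to $\vx_T/\o$, up to the scaling $b$. Transversality is the condition $V(\vx_T\cdot\nu)\neq0$. On $\Sigma$ the terms coming from differentiating $\vx_T$ in $\vy$ vanish, because $\partial_{y_j}\vx_T=-\partial_{y_j}\vs\perp\nu$. What remains is $\vx_T\cdot \partial_a\nu$, which is $-\,a^T\hess(q)\,(\vx_T')$ with $\vx_T'$ proportional to $a$. This is a nonzero multiple of $a^T\hess(q)a$, and it does not vanish because $\hess(q)$ is definite and $a\neq0$ (we have $a\ne 0$ since $\vx_T\neq0$ is tangent). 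So $\pi_R$ is a fold. Note that this side uses only the convexity of $S$.

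Next we treat $\pi_L(\vy,\vx,\o)=(\vy,\vxtt A\vxt,2\o B A\vx_T,\o)$. Fixing $(\vy,\o)$, the map $\vx\mapsto (\vxtt A\vxt, BA\vx_T)$ has Jacobian rows $2\vx_T^TA$ and $BA$. This is singular exactly when $\vx_T^TA\cdot(\text{row span of }BA)^{\perp}$ fails to be transverse. Equivalently, $A\vx_T$ lies in the span of the rows of $B$, which are the tangent vectors to $S$. Since this description looks different from the one found for $\pi_R$, we will check consistency using the fact that both determinants must define the same $\Sigma$. The kernel of $\D\pi_L$ is $\partial_\vx$ in the direction $w$ with $BAw=0$, i.e.\ $Aw\parallel\nu$, so $w=A^{-1}\nu$ up to scaling. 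We also need $\vx_T^TAw=\vx_T\cdot\nu=0$, which holds on $\Sigma$, so the two descriptions agree. Transversality now reads $w\cdot\n_\vx(\vx_T\cdot\nu)=\nu^TA^{-1}\nu\neq0$. This is exactly where positive definiteness of $A$ enters, and it holds for every nonzero $\nu$. The result is the fold condition for $\pi_L$.

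We expect the main obstacle to be the bookkeeping in the $\pi_R$ transversality computation. There we must carefully track the kernel vector, which mixes $\vy$ and $\o$, and show that the derivative of the defining function reduces exactly to the second fundamental form $a^T\hess(q)a$. We would do this by differentiating $\vx_T\cdot\nu(\vy)$ directly and using the defining relation of $\Sigma$ to cancel the first-order terms.
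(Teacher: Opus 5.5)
Your proposal follows the paper's proof essentially step for step. It uses the same parametrization $(y',x,\omega)$ of $C$ and the same singular set $\Sigma=\{x_T\cdot\nu=0\}$ with $\nu=(-\nabla q,1)$. It finds the same kernels: $\delta x\parallel A^{-1}\nu$ for $\pi_L$, and $\delta y'=\delta\omega\,x_T'/\omega$ for $\pi_R$. It checks the same transversality quantities, $\nu^TA^{-1}\nu$ and $(x_T')^T\operatorname{Hess}(q)\,x_T'$, and uses $x\notin S$ to exclude $x_T'=0$.

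There is one slip to fix. Since $\begin{pmatrix}x_T^TA\\ BA\end{pmatrix}=\begin{pmatrix}x_T^T\\ B\end{pmatrix}A$, this matrix is singular iff $x_T$ (not $Ax_T$) lies in the row span of $B$. You should also state, as the paper does, that $\det \D\pi_L$ is a nonvanishing multiple of $x_T\cdot\nu$. Corank one on a smooth hypersurface plus a transversal kernel does not by itself give a fold; consider $(x',x_n^3)$.
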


Recall that we assume $t>0$ in this article (see \eqref{def:psi}). Note that if $A$ is negative definite, then there are no solutions to $\vxtt A \vxt = t$ for $t>0$, but if we restrict to  $t<0$ then we get the same result as in Theorem \ref{thm:pos def}.

\begin{proof} 
Using the parameterization $(\vy,\vx,\o)$ of the canonical relation $C$, we have 
\[
\pi_{L}(\vy, \vx,\o)= (\vy,  \vx_T^T A \vx_T, 2 \omega \vx_T^T A B^T, \omega).
\]
We next  follow \cite{WHQ} in computing $\D \pi_L$ and $\D \pi_R$. We have %
\[
\D \pi_L= 
\begin{pmatrix} 
I_{n-1} & 0 & 0 \\
-2 \vx_{T}^{T}AB^{T} & 2 \o \vx_T^T A & 0\\
\star &  2\o BA & 2 BA\vx_{T}\\
0 & 0 & 1
\end{pmatrix}.
\]
The quantities denoted by  $\star$ here and henceforth are not consequential to the calculations. 

A straightforward calculation gives 
\bel{Det DPiL}
\det (\D \pi_L)=2\o\det \begin{pmatrix} 
\vx_T^T A \\
BA\\
\end{pmatrix} =2 \o  \det \Bigg{(}\begin{pmatrix}
\vx_T^T \\
B\\
\end{pmatrix} A\Bigg{)} =2\o \det \begin{pmatrix}
\vx_T^T \\
B\\
\end{pmatrix} \det (A).
\ee
Next 
\[
\det \begin{pmatrix}
\vx_T^T \\
B
\end{pmatrix} 
= \det \begin{pmatrix} 
x_1-y_1 & x_2-y_2 & \cdots & x_{n-1}-y_{n-1} & x_n-q(\vy) \\
1 & 0 & \dots & 0 & \frac{\partial q}{\partial y_1}\\
\vdots & \vdots & \ddots & \vdots & \vdots \\
0 & 0 & \dots & 1 & \frac{\partial q}{\partial y_{n-1}}\\
\end{pmatrix}.
\]
By an easy computation, this turns out to be 
\[
\det \begin{pmatrix}
\vx_T^T \\
B
\end{pmatrix} 
= (x_n-q(\vy)-\frac{\partial q}{\partial y_1} (x_1-y_1)-\dots -\frac{\partial q}{\partial y_{n-1}}(x_{n-1}-y_{n-1})=
(-(\nabla q)^T(\vy), 1) \cdot \vx_T^T.
\]

Thus, $\D \pi_L$ drops rank where $\det (\D \pi_L)=0$, that is,  on the codimension 1 hypersurface 
\Beq \label{Eq}  \Sigma=\{ (\vy,\vx,\o):  (-(\nabla q)^T(\vy), 1) \cdot (x_1-y_1,\cdots, x_{n-1}-y_{n-1}, x_n-q(\vy))=0    \}.\Eeq
\par  Since the derivative with respect to $x_n$ is $1$, the determinant drops rank simply by $1$.

Next, we compute  $\ker (\D \pi_L)$ to determine the nature of the singularity of $\pi_L$. Assume that $(\delta \vy, \delta \vx,\delta \o)\in \ker(\D \pi_L)$. Then note that $\delta \vy=\delta \o=0$.  We then have 
\[
\begin{pmatrix} 
\vx_T^T \\
B
\end{pmatrix} 
A\begin{pmatrix} 
\delta x_1 \\
\delta x_2\\
\vdots\\
\delta x_n\\
\end{pmatrix} =0
\]
Let $\vr=A \delta \vx$. 
Expanding this system, 
\[
\begin{pmatrix} 
\vx_T^T \\
B
\end{pmatrix} \vr=
\begin{pmatrix} 
x_1-y_1 & x_2-y_2 & \cdots & x_{n-1}-y_{n-1} & x_n-q(\vy) \\
1 & 0 & \cdots & 0 & \frac{\partial q}{\partial y_1}\\
\vdots & \vdots & \ddots & \vdots & \vdots \\
0 & 0 & \cdots & 1 & \frac{\partial q}{\partial y_{n-1}}
\end{pmatrix} 
\begin{pmatrix} 
r_1 \\
r_2\\
\vdots\\
r_n\\
\end{pmatrix} =0.
\]
Thus
\[ 
\begin{aligned} 
 r_1=- \frac{\partial q}{\partial y_1}  r_n,\quad r_2=- \frac{\partial q}{\partial y_2}  r_n,\quad
 \cdots, \quad r_{n-1}=- \frac{\partial q}{\partial y_{n-1}}  r_n
\end{aligned} 
\]
and \\
\[ r_1 (x_1-y_1)+ r_2(x_2-y_2)+ \cdots+ (x_n-q)r_n=0.
\]
Replacing $r_{i}$ in the last equation, we get  
\Beq\label{DpiL-det-equation}
r_n \Big{\{} (x_n-q) -\frac{\partial q}{\partial y_1} (x_1-y_1)- \dots -\frac{\partial q}{\partial y_{n-1}}(x_{n-1}-y_{n-1})\Big{\}} =0. 
\Eeq
This is equivalent to $r_n \langle -\nabla q, 1\rangle \cdot
\vx_T^T=0$ which is already true since this is the defining equation of $\Sigma$. We will next investigate if the singularity is a fold. Let us assume that we have a $(\delta \vy, \delta \vx,\delta \o)$ in the kernel that is also tangent to the surface $\Sigma$. As before $\delta \vy$ and $\delta \o$ components are $0$. We then have $\langle -\nabla q, 1\rangle \cdot \delta \vx=\langle -\nabla q, 1\rangle  \cdot A^{-1} \vr=r_n \langle-\nabla q, 1\rangle \cdot A^{-1} \langle -\nabla q, 1\rangle^T$. Thus, if $A$ is positive definite or negative definite, the relation above implies $ r_n \langle-\nabla q, 1\rangle \cdot A^{-1} \langle -\nabla q, 1\rangle^T =0$ only if $r_n=0$, which then implies that $r_i=0$ for each $1\leq i\leq n-1$ as well. Hence $\delta x=0$, which implies that $\pi_L$ has a fold singularity. Therefore, if $A$ is positive or negative definite, then $\pi_{L}$ has only fold-type singularities.

Next, we consider the right projection $\pi_R$. From the canonical relation, we have that, 
\[ 
\pi_R(\vy, \vx,\omega)=(\vx, 2\omega \vx^{T}_T A).\]
Thus 
\[
\D \pi_R= \begin{pmatrix} 
\zero_{n\times (n-1)} &  I_n&\zero_{n\times 1} \\
-2\omega AB^T & \star& 2A\vx_T
\end{pmatrix}.
\]
The right projection $\pi_R$ drops rank by $1$ over the same surface
$\Sigma$,  and $(\delta \vy,  \delta \vx,\delta \omega) \in \ker( \D \pi_R)$ if 
\[\delta\vx = \zero,\ \ \text{and }\ \ 
2A \begin{pmatrix} 
-\omega B^T &\vx_T
\end{pmatrix}
\begin{pmatrix} 
\delta \vy \\
\delta \omega\\
\end{pmatrix}=0.
\] 
We then have 
 \[
 \begin{pmatrix} 
 -\omega & 0 & \dots & 0 & x_1-y_1\\
 0 & -\omega & \dots & 0 & x_2-y_2\\
\vdots & \vdots & \ddots & \vdots & \vdots \\
0 & 0 & \dots & -\omega & x_{n-1}-y_{n-1}\\
-\omega\frac{\partial q}{\partial y_1} & -\omega \frac{\partial q}{\partial y_2} & \dots & -\omega \frac{\partial q}{\partial y_{n-1}} & x_n-q\\
\end{pmatrix}
\begin{pmatrix} 
\delta y_1 \\
\delta y_2\\
\vdots \\
\delta y_{n-1}\\
\delta \omega
\end{pmatrix}=0.
\]

Solving for $\delta y_i$, we obtain $\delta y_i=\frac{x_i-y_i}{\omega} \delta \omega$. Hence $\ker(\D \pi_R)$ is given by \[
\delta \o \begin{pmatrix} 
 \frac{x_1-y_1}{\omega}  \\
\frac{x_2-y_2}{\omega} \\
\vdots \\
\frac{x_{n-1}-y_{n-1}}{\omega} \\
1
\end{pmatrix}.
\]When applying this to the defining function of $\Sigma$, it gives
\Beq\label{Eq 1}
\begin{aligned} 
&\lb \frac{x_1-y_1}{\omega} \frac{\partial}{\partial y_1} +  \cdots + \frac{x_{n-1}-y_{n-1}}{\omega} \frac{\partial}{\partial y_{n-1}} +  \frac{\partial}{\partial \omega} \rb \\ &\times \lb x_n-q(y)-\frac{\partial q}{\partial y_1} (x_1-y_1)-\dots -\frac{\partial q}{\partial y_{n-1}}(x_{n-1}-y_{n-1}\rb\\
&=
 \frac{1}{\omega} {\vx'}_T^T \hess (q) \vx'_T \mbox{ where } \vx'=(x_1, x_2, \dots, x_{n-1}).
 \end{aligned} 
 \Eeq
 Since the support of the function $f$ we are interested in recovering and $S$ do not intersect, we have that $\vx_{T}\neq 0$. Let us consider two cases: (i) when $\vx_{T}'=0$ and (ii) when $\vx_{T}'\neq 0$. In the second case, if we assume that $\hess(q)$ is positive or negative definite, then \eqref{Eq 1} is non-zero, showing that $\pi_{R}$ has a fold singularity. In the first case, since we are considering points $(\vy,\vx,\o)$ on $\Sigma$, we have $x_n=q(\vy)=q(\vx')$. But this would mean that we have a point common to the support of the function $f$ as well as the surface $S$. We get a contradiction. Therefore, if we assume that Hess$(q)$ is positive or negative definite, $\pi_R$ has a fold singularity. 
 \end{proof}

\begin{remark}
If Hess$(q)$ is identically zero, then $S$ is a hyperplane, and $\pi_R$ has a blowdown singularity.
\end{remark}

\begin{remark}\label{rem:no intersection} Note that the condition defining $\Sigma$ is just that $\vx$ is in the
 tangent plane to $S$ at $(\vy,q)$. Therefore, if the surfaces of
integration given by \bel{def:T}T(\vy,t) = \sparen{\vx\st \vx^T_T A
\vx_T =t}\ee do not intersect the tangent plane to $S$ at $(\vy,q)$,
then there are no points on $C$ where the projections drop rank. This
is one of the main theorems of \cite{WHQ}. \end{remark}

%%%%%%%%%%%%%%%%%%%%%%%%%%%%%%%%%%%%%%%%%%%%%%%%%

\section{Case when $A$ is indefinite and $S$ is a smooth strictly convex hypersurface}\label{sec:evals-mixed-sign}

We consider the Radon transform $\Rc$ as before, with $A$ an invertible, symmetric matrix with  $k$ positive eigenvalues and $ n-k$ negative eigenvalues.  We distinguish two cases: when $k=1$ and when $k>1$.

In some of the results in this section, we will assume $A$ is diagonal, so we will often use the notation $\diag(a_{11},a_{22},\dots, a_{nn})$ to denote such a matrix.

\subsection{Case $k >1$}\label{sec:k>1 indefinite}
\par The main theorem for this subsection focuses on the case when  $S$ is a smooth strictly convex hypersurface.

\begin{theorem}\label{thm:k>1 + evals}  Let  $\Rc: \Ec'(\rr^n\setminus S ) \rightarrow \Dc'(\Omega
\times(0,\infty))$ be the Radon transform defined by \eqref{def:psi}
and \eqref{def:R}. If $A$ is a real symmetric invertible matrix with
$k\in \{2,3,\dots,n-1\}$ positive eigenvalues and $ n-k$ negative
eigenvalues, and $S$ is a smooth, strictly convex hypersurface, then
$\Rc$ is an FIO associated to a canonical relation with $\pi_L$ having a
cusp singularity and $\pi_R$ having a fold singularity.
\end{theorem}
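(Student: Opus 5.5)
The plan is to reuse almost all of the computations in the proof of Theorem \ref{thm:pos def}, since none of them used the sign of $A$ except the fold test for $\pi_L$.
\begin{itemize}
\item The formulas for $\D\pi_L$ and $\D\pi_R$ are unchanged.
\item Their determinants are unchanged, namely $2\o\det(A)\,\nu\cdot\vx_T$ with $\nu(\vy):=(-(\nabla q)^T(\vy),1)$.
\item Hence both projections drop rank simply by $1$ on the same hypersurface $\Sigma$ of \eqref{Eq}.
\end{itemize}

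For $\pi_R$ I will copy the argument verbatim. The kernel of $\D\pi_R$ is spanned by $v_R=\sum_i\frac{x_i-y_i}{\o}\PD_{y_i}+\PD_\o$. Applying it to the defining function of $\Sigma$ gives \eqref{Eq 1}, that is $\frac1\o {\vx'}_T^T\hess(q)\vx'_T$. This is nonzero because $\hess(q)$ is definite and $\vx'_T\neq0$ on $\Sigma$ (as $S\cap\operatorname{supp} f=\emptyset$). So $\pi_R$ is a fold, independently of $A$.

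For $\pi_L$, the same computation shows that $\ker\D\pi_L$ is spanned by the purely $\vx$-directional field $v_L=A^{-1}\nu\cdot\PD_\vx$. Take $K=\nu(\vy)\cdot\vx_T$. Then
\[
\D K(v_L)=g(\vy):=\nu^TA^{-1}\nu .
\]
When $A$ is definite this never vanishes, which gave the fold. Now $A^{-1}$ is indefinite with $k\ge2$ positive eigenvalues, so the cone $\{\nu^TA^{-1}\nu=0\}$ is nontrivial. I would argue that the affine hyperplane $\{\nu_n=1\}$ meets it, so the normals $\nu(\vy)$ of the strictly convex $S$ can cross it. Thus $\pi_L$ is a fold on $\Sigma\cap\{g\neq0\}$, and the candidate cusp set is
\[
S_{1,1}=\Sigma\cap\{g(\vy)=0\},
\]
which is cut out by two equations.

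The key step is to show that $g$ has a simple zero, i.e.\ $\D g\neq0$ on $\{g=0\}$, together with the rank-two condition on $\D K$ and $\D g$. Since $\PD_{y_j}\nu=(-\PD_{y_j}\nabla q,0)$, we get
\[
\nabla_{\vy}g=-2\,\hess(q)\,(A^{-1}\nu)',
\]
where $'$ denotes the first $n-1$ components. Because $\hess(q)$ is invertible, this vanishes only if $A^{-1}\nu=c\,e_n$. But then $g=\nu^TA^{-1}\nu=c\,\nu_n=c$, and $c\neq0$ since $A^{-1}\nu\neq0$, contradicting $g=0$. Hence $\D g\neq0$ on the zero set.

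Linear independence of $\D K$ and $\D g$ holds for a simple reason. $\D K$ has $\vx$-component $\nu\neq0$, while $\D g$ has only $\vy$-components, so the two differentials are independent and $S_{1,1}$ is a codimension-two submanifold.

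I expect the main obstacle to be matching this with the cusp definition. $v_L$ differentiates only in $\vx$ while $g$ depends only on $\vy$, so the ``simple zero'' must be read for $\D K(v)$ as a function on $\Sigma$, whose differential there is $\D g\neq0$. I would then pass to the normal-form coordinates $(x_1,\dots,x_{n-1},h)$ of the Preliminaries and check $\PD^3_{x_n}h\neq0$ there. I would try to do this by writing $\pi_L$ near a point of $S_{1,1}$ with $\vy$ as parameters and expressing $h$ along the kernel line through the quadratic form $\vx_T^TA\vx_T$. This is where I expect the real work, and possibly a need to use nonvanishing of $\hess(q)$ a second time.
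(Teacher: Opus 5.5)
Your first part is the paper's proof: the formula $\nabla_{\vy}g=-2\hess(q)(A^{-1}\nu)'$, the argument that it cannot vanish where $g=0$, and the fold argument for $\pi_R$. You also make the rank-two condition on $\D K,\D g$ explicit, which the paper leaves implicit. The paper stops there. It reads ``$\D K(v)$ has a simple zero'' as $\nabla G\neq 0$ on $\Sigma_1$ and concludes that $\Sigma_1$ consists of cusp points. Under that reading your first part is already a complete proof, identical to the paper's. The step you defer, $\PD^3_{x_n}h\neq 0$, is where your plan breaks: it fails identically. So no argument can upgrade this to a cusp in the sense of the normal form $(x_1,\dots,x_{n-1},x_{n-1}x_n+x_n^3)$ given in the Preliminaries.

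The obstruction is the observation you already made, $v_L(g)\equiv 0$. Fix $(\vy,\o)$ and put $m=A^{-1}\nu(\vy)$. Using $B\nu=0$, $m^TAm=g$ and $m^TA\vx_T=\nu\cdot\vx_T=K$, you get
\[
\pi_L(\vy,\vx+\sigma m,\o)=\bigl(\vy,\ \vx_T^TA\vx_T+2\sigma K+\sigma^2 g(\vy),\ 2\o\,\vx_T^TAB^T,\ \o\bigr).
\]
In coordinates adapted to the kernel, with $\sigma$ in the role of $x_n$, the one nontrivial component is therefore at most quadratic in $\sigma$. Hence $\PD^2_\sigma h=2g$ and $\PD^3_\sigma h\equiv 0$.

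On $\Sigma_1=\{K=0,\ g=0\}$ the whole line $\sigma\mapsto(\vy,\vx+\sigma m,\o)$ stays in $\Sigma_1$ and is mapped to a single point. So $\ker\D\pi_L\subset T\Sigma_1$ at every point of $\Sigma_1$, and $\pi_L$ is not finite-to-one there. A cusp, by contrast, has kernel transverse to $S_{1,1}$ and is at most three-to-one. Combined with your independence of $\D K$ and $\D g$, the local model is $(z,u,v,\sigma)\mapsto(z,u,v,u\sigma+v\sigma^2)$, which is not a Morin singularity.

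Such points do occur with $t>0$. Take $n=3$, $A=\diag(1,1,-1)$, $S$ the upper unit hemisphere and $\vy=(1/\sqrt2,0)$. Then $\nu=(1,0,1)$, $m=(1,0,-1)$ and $g=0$. For $\vx_T=(a,b,-a)$ with $b\neq 0$ (a point of $\Sigma$ with $\vx\notin S$), $\pi_L=(\vy,b^2,2\o(0,b),\o)$, independent of $a$.

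So your first part, like the paper's proof, establishes only that $\ker\D\pi_L$ is tangent to $\Sigma$ exactly along a smooth codimension-two set $\Sigma_1$ on which $\D G\neq0$. Calling these points cusps requires the third-derivative condition, and that condition fails here.
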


We will prove this theorem in subsection \ref{GenA-strictly convex}.
To make the proof more accessible,  we will consider a few simpler
cases in $\rr^n$:
\begin{enumerate} 
\item $A$ is a diagonal matrix with entries $\pm 1$ and $S$ is the sphere, 
\item $A$ is diagonal with entries $\pm 1$ and $S$ is any strictly
convex hypersurface.
\end{enumerate} 
We will then consider the general case. The calculations above will be helpful in dealing with this general case.   We will consider
$\pi_L$ only since the result and the proof for $\pi_R$ is the same
as the one in Theorem \ref{thm:k>1 + evals}.

\subsubsection{Special case 1: $A$ diagonal with entries $\pm 1$  and $S$ a  sphere:}

Let us consider the matrix with $k$ entries of $1$ and $n-k$ entries of   $-1$ along the diagonal with $k \in \{2,\dots, n-1\}$.

Let $q$ locally describe the unit sphere. Using coordinates $\vy=(y_1,\cdots, y_{n-1})$ we  have 
\[ q(\vy)=\sqrt{1-y_1^2-\cdots-y_{n-1}^2}.\]
We compute 
\[
\begin{aligned} 
\langle -\n q,1\rangle \delta x&= r_{n}\langle \frac{y_1}{\sqrt{1-|\vy|^2}},\cdots, \frac{y_{n-1}}{\sqrt{1-|\vy|^2}},1\rangle A^{-1}\langle \frac{y_1}{\sqrt{1-|\vy|^2}},\cdots, \frac{y_{n-1}}{\sqrt{1-|\vy|^2}},1\rangle^{T}\\
&=r_n\Big{\{}\frac{y_1^2+\cdots+y_{k}^2-(y_{k+1}^2+\cdots+y_{n-1}^2)}{1-|\vy|^2}-1\Big{\}}.\\
&=r_n\Big{\{}\frac{y_1^2+\cdots+y_{k}^2-(y_{k+1}^2+\cdots+y_{n-1}^2)-1+|\vy|^2}{1-|\vy|^2}\Big{\}}\\
&=r_{n}\Big{\{} \frac{2(y_1^2+\cdots+y_{k}^2)-1}{1-|\vy|^2}\Big{\}}.
\end{aligned} 
\]
In the complement of the set $\{(\vy,\vx,\o) \in \Sigma$ such that $y_1^2+\cdots+y_{k}^2=\frac{1}{2}\}$, where $\Sigma$ is given by \eqref{Eq}, we have fold points as before. 

Let us next consider the set $\Sigma_1=\{(\vy,\vx,\o) \in \Sigma: y_1^2+\cdots+y_k^2=\frac{1}{2}\}$. More explicitly,
\[
\Sigma_1=\Bigg{\{}(\vy, \vx,\o): \langle -\n q,1\rangle\cdot \lb x_1-y_1,\cdots, x_{n-1}-y_{n-1}, x_n-q(\vy)\rb=0; y_1^2+\cdots+y_{k}^2=\frac{1}{2}=y_{k+1}^2+\cdots+y_{n}^2\Bigg{\}}.
\]

As in Theorem \ref{thm:pos def}, we only need to consider a vector field in $\ker(\D
\pi_L)$ of the form $(0, v_1,\cdots, v_{n},0)$. We then have, for each $1\leq i\leq k$, 
\[
v_i=-\frac{y_i}{\sqrt{1-|\vy|^2}} v_{n}
\]
and for $k+1\leq i\leq n-1$, 
\[
v_{i}=\frac{y_i}{\sqrt{1-|\vy|^2}} v_{n}.
\]
Hence $v=(v_1,\cdots, v_n)$ is of the form 
\Beq\label{Eq5.1}
\lb -\frac{y_1}{\sqrt{1-|\vy|^2}}v_n,\cdots, -\frac{y_{k}}{\sqrt{1-|\vy|^2}} v_{n}, \frac{y_{k+1}}{1-|\vy|^2}v_n,\cdots, \frac{y_{n-1}}{1-|\vy|^2}v_n,v_n\rb,
\Eeq 
with $v_n\neq 0$ anywhere.
For $p\in \Sigma_1$, such a $v|_p$ is in $T\Sigma_p$, as can be seen by considering the defining function $K$ from \eqref{Eq} 
and noting that 
\[
\D K (v) = v(K)=\sum\limits_{i=1}^{k} \frac{-y_{i}^2}{1-|\vy|^2} v_{n}+\sum\limits_{i=k+1}^{n-1} \frac{y_{i}^2}{1-|\vy|^2} v_{n}+v_{n}=v_n\lb \frac{1-2|\vy'|^2}{1-|\vy|^2}\rb=0.
\]
Here, $\vy'=(y_1,\cdots,y_k)$.

Next, we check the requirement for a simple zero in the definition of cusp points. Let us take $v$ to be a smooth vector field along $\Sigma$ such that $v\in \ker(\D \pi_L)$ at each point of $\Sigma$. Then $v$ has exactly the form given in \eqref{Eq5.1}. Define 
\[
G(\vy,\vx,\o):=\D K(v)= v(K)=v_n\lb \frac{1-2|\vy'|^2}{1-|\vy|^2}\rb.
\]
Here $v_n$ is a smooth function of $(\vy,\vx,\o)$ non-vanishing. 
Then  for $1\leq i\leq k$, 
\[
\begin{aligned}
\frac{\PD G}{\PD y_i}&=\frac{\PD v_n}{\PD y_i}\frac{1-2|\vy'|^2}{1-|\vy|^2}+v_n \frac{-2y_i +4y_i(y_{k+1}^2+\cdots +y_{n-1}^2)}{(1-|\vy|^2)^2}\\
&=\frac{\PD v_n}{\PD y_i}\frac{1-2|\vy'|^2}{1-|\vy|^2}-2v_n y_i \frac{1-2(y_{k+1}^2+\cdots +y_{n-1}^2)}{(1-|\vy|^2)^2}.
\end{aligned} 
\]
At points $p$ on $\Sigma_1$, the first expression on the right vanishes and  since $y_n>0$ (being described as the graph of the function $f(\vy)=\sqrt{1-|\vy|^2}$), $1-2(y_{k+1}^2+\cdots +y_{n-1}^2)\neq 0$.  Hence at least one of the partial derivatives, $\frac{\PD G}{\PD y_i}$ for $1\leq i\leq k$ cannot vanish at points on $\Sigma_1$, since $y_1^2+\cdots+y_k^2=\frac{1}{2}$. Thus, we get that the set $\Sigma_1$ consists of cusp points for $\pi_L$.

\subsubsection{Special case 2: $A$ diagonal with entries $\pm 1$ and $S$ a smooth strictly convex hypersurface}

Let us  consider the matrix $A$, with $ k$ entries of $1$ and $n-k$ entries of $-1$,  along the diagonal, 
with $k \in \{2,\dots, n-1\}$. 
\[A=\diag(1,\dots,1,-1,\dots,-1).\] Let $S$ be a smooth hypersurface locally given as the graph of the smooth function $q(\vy)$ for $\vy\in \Rb^{n-1}$. 
Then 
\[
\langle -\n q,1\rangle A^{-1}\langle -\n q,1\rangle^{T}=\sum\limits_{i=1}^{k}|\PD_{y_i}q|^2-\sum\limits_{i=k+1}^{n-1}|\PD_{y_i}q|^2-1.
\]
As before, if we exclude the points where $\langle -\n q,1\rangle A^{-1}\langle -\n q,1\rangle^{T}=0$, then, in this set, we have fold points. 

Next, we study the subset $\Sigma_1\subset \Sigma$, where $\Sigma$ is  defined in \eqref{Eq}, by 
\[
\Sigma_1=\{(\vy,\vx,\o) \in \Sigma:  \sum\limits_{i=1}^{k}|\PD_{y_i}q|^2-\sum\limits_{i=k+1}^{n-1}|\PD_{y_i}q|^2=1\}.
\]
We would like to show that $\Sigma_1$ consists of cusp points. We first show that for a point $p\in \Sigma_1$, $\ker(\D \pi_L)_p\subset T\Sigma_p$.  

As in Theorem \ref{thm:pos def}, if  $(\delta y, \delta x, \delta \omega) \in \ker(\D\pi_L)$, then,  $\delta y=0= \delta \omega$,  and we only need to consider a vector field in $\ker(\D \pi_L)$ of the form $(0, v_1,\cdots, v_{n},0)$. We then have, for each $1\leq i\leq k$, 
\[
v_i=v_n\PD_{y_i} q 
\]
and for $k+1\leq i\leq n-1$, 
\[
v_{i}=-v_n \PD_{y_i} q.
\]

Hence $v=(v_1,\cdots, v_n)$ is of the form 
\Beq\label{Eq6.1}
v=(v_n\PD_{y_1} q,\cdots, v_n\PD_{y_k}q,-v_n \PD_{y_{k+1}}q,\cdots,-v_n\PD_{y_{k+1}}q).
\Eeq
For $p\in \Sigma_1$, such a $v\in T\Sigma_p$, since for $K$ the defining function in \eqref{Eq}, we have 
\[
\D K(v)=v(K)=v_n\lb -\sum\limits_{i=1}^{k}|\PD_{y_i} q|^2+\sum\limits_{i=k+1}^{n-1}|\PD_{y_{i}}q|^2+1\rb=0.
\]
Here we note that $v_n$ is a non-vanishing smooth function of $(\vy,\vx,\o)$. 

Next, we check the simple zero condition in the definition of cusp points. Let us take $v$ to be a smooth vector field along $\Sigma$ such that $v\in \ker(\D \pi_L)$ at each point of $\Sigma$. Then $v$ has exactly the form given in \eqref{Eq6.1} with the $\vy$ and $\o$ components being $0$. Define 
\[
G(\vy,\vx,\o):=\D K(v)= v(K)=v_n\lb -\sum\limits_{i=1}^{k}|\PD_{y_i} q|^2+\sum\limits_{i=k+1}^{n-1}|\PD_{y_{i}}q|^2+1\rb.
\]
Here $v_n$ is a smooth non-vanishing function of $(\vy,\vx,\o)$. 
Then  for $1 < k\leq n-1$,

\[
\PD_{y_k}G(\vy,\vx,\o)=\frac{\PD v_n}{\PD y_i}\lb -\sum\limits_{i=1}^{k}|\PD_{y_i} q|^2+\sum\limits_{i=k+1}^{n-1}|\PD_{y_{i}}q|^2+1\rb+2v_{n}\lb -\sum\limits_{i=1}^{k} \frac{\PD^2 q}{\PD y_i \PD y_k}\PD_{y_i}q+\sum\limits_{i=k+1}^{n-1} \frac{\PD^2 q}{\PD y_i \PD y_k}\PD_{y_i}q\rb.
\]
At points on $\Sigma_1$, the first term vanishes, and since $v_n$ is non-vanishing, we have a system of equations of the form 
\[
\begin{pmatrix}
    \frac{\PD^2 q}{\PD y_1^2}&\cdots & \frac{\PD^2 q}{\PD y_1 \PD y_{n-1}}\\
    \vdots &\ddots & \vdots\\
    \vdots &\ddots & \vdots\\
    \vdots &\ddots & \vdots\\
    \frac{\PD^2q}{\PD y_{1} \PD y_{n-1}}&\cdots & \frac{\PD^2 q}{\PD y_{n-1}^2}
\end{pmatrix}\begin{pmatrix}-\PD_{y_1}q\\ \vdots \\-\PD_{y_{k}}q\\\PD_{y_{k+1}}q\\ \vdots\\ \PD_{y_{n-1}}q
\end{pmatrix} = 
\frac{1}{2v_n}\begin{pmatrix} \PD_{y_1} G\\ \vdots \\\vdots \\ \vdots\\ \PD_{y_{n-1}}G
\end{pmatrix}.
\]
If we assume that the function $q$ locally gives the graph of the strictly convex hypersurface $S$,  then the Hessian matrix on the left is positive definite. Also, $\n q$ is non-vanishing, since on the level set we are interested in, that is on $\Sigma_1$, we have 
\[
-\sum\limits_{i=1}^{k}|\PD_{y_i} q|^2+\sum\limits_{i=k+1}^{n-1}|\PD_{y_{i}}q|^2+1=0.
\]
Hence not all components of $\PD_{y_i}q$ can be $0$ on $\Sigma_1$. 
Therefore, $\n G$ is non-vanishing at points on $\Sigma_1$, showing that $\Sigma_1$ consists of cusp points.

\subsubsection{General $A$, $k>1$, $S$ strictly convex
hypersurface}\label{GenA-strictly convex} We now prove the main theorem of this
section, Theorem \ref{thm:k>1 + evals}.

\begin{proof}[Proof of Theorem \ref{thm:k>1 + evals}]
Let $A$ be a general symmetric matrix with $k>1$ positive and $n-k$ negative eigenvalues. 
As before, if we exclude the points where $\langle -\n q,1\rangle A^{-1}\langle -\n q,1\rangle^{T}=0$, then in this set, we have fold points. 

Let us now consider the subset $\Sigma_1\subset \Sigma$, where $\Sigma$ is  defined in \eqref{Eq}, by 
\[
\Sigma_1=\{(\vy,\vx,\o) \in \Sigma:  \langle -\n q,1\rangle A^{-1}\langle -\n q,1\rangle^{T}=0\}.
\]
We would like to show that $\Sigma_1$ consists of cusp points. 
As before, we only need to consider a vector field in $\ker(\D
\pi_L)_{p}$ of the form $(0, v_1,\cdots, v_{n},0)$.
Denoting $v=(v_1,\cdots,v_n)$ and then letting $r=
Av$, we then have, \[ 
\begin{aligned} 
& r_1=- \frac{\partial q}{\partial y_1}  r_n\\
&r_2=- \frac{\partial q}{\partial y_2}  r_n\\
& \vdots \\
&r_{n-1}=- \frac{\partial q}{\partial y_{n-1}}  r_n.
\end{aligned} 
\]
Hence $r$ is of the form 
\Beq\label{Eq6.2}
\vr=(-q_{y_1}r_n,\cdots,-q_{y_{n-1}} r_n, r_n)
\Eeq
and $v=A^{-1}\vr$. 
We note that $v|_p\in T\Sigma_p$, since for $K$ in  \eqref{Eq}, we have 
\[
\D K(v)=v(K)=r_n\lb \langle -\n q,1\rangle A^{-1}\langle -\n q,1\rangle^{T}\rb=0.
\]
We note that $r_n$ is a non-vanishing smooth function of $(\vy,\vx,\o)$.

Next, let us check the simple zero condition in the definition of a cusp. Let us
take $v$ to be a smooth vector field along $\Sigma$ such that $v\in
\ker(\D \pi_L)$ at each point of $\Sigma$. Then $v$ has exactly
the form $A^{-1}r$ with $r$ given in \eqref{Eq6.2} and with the $\vy$ and $\o$ components being $0$. Define 
\[
G(\vy,\vx,\o):=\D K(v)= v(K)=r_n\lb \langle -\n q,1\rangle A^{-1}\langle -\n q,1\rangle^{T}\rb.
\]
Here $r_n$ is a smooth function of $(\vy,\vx,\o)$ non-vanishing. 
Taking derivatives of $G$ with respect to $y_{k}$ for $1\leq k\leq n-1$, we have the following system
\[
\begin{pmatrix}
    \frac{\PD^2 q}{\PD y_1^2}&\cdots & \frac{\PD^2 q}{\PD y_1 \PD y_{n-1}}\\
    \vdots &\ddots & \vdots\\
    \vdots &\ddots & \vdots\\
    \vdots &\ddots & \vdots\\
    \frac{\PD^2q}{\PD y_{1} \PD y_{n-1}}&\cdots & \frac{\PD^2 q}{\PD y_{n-1}^2}
\end{pmatrix}\begin{pmatrix}\lb A^{-1}\rb_{1}\cdot (-\n q,1)\\ \vdots \\\vdots\\ \lb A^{-1}\rb_{n-1}\cdot (-\n q,1)
\end{pmatrix} = 
\frac{1}{2v_n}\begin{pmatrix} \PD_{y_1} G\\ \vdots \\\vdots \\ \vdots\\ \PD_{y_{n-1}}G
\end{pmatrix}.
\]
Here $\lb A^{-1}\rb_i$ refers to the $i^{\mathrm{th}}$ row of $A^{-1}$.
If we assume that the function $q$ locally gives the graph of the strictly convex hypersurface $S$,  then the Hessian matrix on the left is positive definite. Also the column vector 
\Beq\label{Eq6.3} 
\begin{pmatrix}\lb A^{-1}\rb_{1}\cdot (-\n q,1)\\ \vdots \\\vdots\\ \lb A^{-1}\rb_{n-1}\cdot (-\n q,1)
\end{pmatrix}
\Eeq
is non-vanishing since on the level set we are interested in. That is on $\Sigma_1$, we have 
\[
\langle -\n q,1\rangle A^{-1}\langle -\n q,1\rangle^{T}=0,
\]
and if we assume that the above column vector is vanishing, then we would have that $\lb A^{-1}\rb_{n} \cdot (-\n q,1)=0$ as well. Since $A$ is invertible, this would imply that $(-\n q,1)$ is the $0$ vector, which is not possible. 
Hence, not all components of \eqref{Eq6.3} can be $0$ on $\Sigma_1$. 
Therefore, $\n G$ is non-vanishing at points on $\Sigma_1$, showing that $\Sigma_1$ consists of cusp points.

\end{proof}

\subsection{Case $k=1$, $A$ diagonal, entries $\pm
1$}\label{sec:k=1 diag 1}
In this section, we consider $A$ to be diagonal with exactly one entry being $1$ and the rest being $-1$ for the case of $S$ being a sphere. In this case, we can prove that there are no cusp singularities for the left projection.

We consider  the top half of the unit sphere.  Let $\Omega$ be the open unit disk in $\rnm$ and let $q(\vy) = \sqrt{1-y_1^2-\cdots-y_{n-1}^2}$.  Let $\tilde{S}$ be the open top half of the  unit sphere: \[ \tilde{S} = \sparen{\vy, q(\vy)\st \vy\in \Omega}.\]

\begin{theorem}\label{thm:diag strictly cvx}  Let  $\Rc: \Ec'(\rr^n\setminus \tilde{S})
\rightarrow \Dc'(\Omega \times(0,\infty))$ be the Radon transform
defined by \eqref{def:psi} and \eqref{def:R}. Let $A$ be a diagonal
matrix, with one entry ($k=1$) of $1$ and $ n-1$ entries of $ -1$ on
the main diagonal. Then $\Rc$ is an FIO
associated to a two-sided fold canonical relation. 
\end{theorem}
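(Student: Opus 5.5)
We will show that the argument of Theorem \ref{thm:pos def} goes through unchanged, because for $k=1$ the bad set $\Sigma_1$ found in Section \ref{sec:k>1 indefinite} never meets $C$ when $t>0$. Everything in the proof of Theorem \ref{thm:pos def} up to the fold test carries over verbatim for any invertible symmetric $A$. This includes the formula \eqref{Det DPiL} for $\det(\D\pi_L)$ and the description \eqref{Eq} of $\Sigma$, along which both projections drop rank simply by one. It also includes the kernel computation: $\ker(\D\pi_L)$ is spanned by $(0,\delta\vx,0)$ with $A\delta\vx = r_n N$, where $N:=(-\n q,1)^T$.

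\textbf{Right projection.} The proof of Theorem \ref{thm:pos def} uses only the invertibility of $A$ to find $\ker(\D\pi_R)$, and uses $\hess(q)$ in \eqref{Eq 1}. For the upper unit hemisphere, $\hess(q)$ is negative definite on $\Omega$. We also have $\vx'_T\neq 0$ on $\Sigma$, since $\vx'_T=0$ would force $\vx\in\tilde S$, which is excluded. Hence \eqref{Eq 1} is nonzero and $\pi_R$ has a fold along $\Sigma$.

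\textbf{Left projection.} As in Theorem \ref{thm:pos def}, a kernel vector is tangent to $\Sigma$ exactly when $r_n\,N^TA^{-1}N=0$. So $\pi_L$ is a fold at every point of $\Sigma$ where $N^TA^{-1}N\neq0$. A direct computation with $A^{-1}=A$ and $q=\sqrt{1-|\vy|^2}$ shows that $N^TAN$ does vanish on part of $\Omega$ (for instance on $2y_i^2=1$, or on $2|\vy|^2=1$, depending on where the entry $+1$ sits). Therefore we must show that no point of $C$ lying over such a $\vy$ belongs to $\Sigma$. This is the main step.

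\textbf{Key step: $\Sigma_1\cap C=\emptyset$.} Let $m:=A^{-1}N$. Then $N^TA^{-1}N=0$ says $m^TAm=0$, so $m\neq0$ is a null vector for the Lorentzian form $\langle u,w\rangle_A=u^TAw$, which has signature $(1,n-1)$. The condition defining $\Sigma$ is $N\cdot\vx_T=0$, which is the same as $\langle m,\vx_T\rangle_A=0$. On $C$ we also have $\vx_T^TA\vx_T=t>0$, so $\vx_T$ is timelike.

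We now invoke the reverse Cauchy--Schwarz property of Lorentzian forms: a timelike vector is never $A$-orthogonal to a nonzero null vector. To check this, write $A=\diag(1,-1,\dots,-1)$ after permuting coordinates, with $u=(u_0,\underline u)$. A null $m$ has $|m_0|=|\underline m|\neq0$. A timelike $u$ has $|u_0|>|\underline u|$. Then
\[
|m_0u_0|>|\underline m|\,|\underline u|\ge|\underline m\cdot\underline u|,
\]
so $\langle m,u\rangle_A=m_0u_0-\underline m\cdot\underline u\neq0$. This contradiction shows $N^TA^{-1}N\neq0$ at every point of $\Sigma\cap C$, so $\pi_L$ has only fold singularities.

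Combining the two projections, $\Rc$ is associated to a two-sided fold canonical relation. This is precisely where $k=1$ is used: for $k>1$ the $A$-orthogonal complement of a null vector does contain vectors with $\vx_T^TA\vx_T>0$, which is the source of the cusps in Theorem \ref{thm:k>1 + evals}.
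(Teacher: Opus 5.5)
Your proof is correct, and it reaches the conclusion through a different key lemma than the paper's. The skeleton is the same in both. You reuse the computations of Theorem~\ref{thm:pos def}, which are valid for any invertible $A$. You note that $\pi_L$ can fail to be a fold only where $N^TA^{-1}N=0$. You then show that this locus never meets $\Sigma$ when $t>0$.

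The paper does that last step by explicit computation on the hemisphere. For $A=\diag(1,-1,\dots,-1)$ it fixes $y_1=\pm 1/\sqrt{2}$, solves the tangent-plane equation for $x_1$, and substitutes into $x_T^TAx_T=t$. Cauchy--Schwarz against a unit vector built from $y$ then gives $t\le 0$. The $+1$ in slots $2,\dots,n-1$ is handled by a permutation of coordinates, and the $+1$ in slot $n$ by a separate computation on $|y|^2=1/2$.

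You replace all of this with the reverse Cauchy--Schwarz inequality for the Lorentzian form $u^TAw$, applied to the null vector $m=A^{-1}N$ and the timelike vector $x_T$. Geometrically, at the critical centers the tangent plane of $S$ is $A$-null, so it cannot meet the two-sheeted hyperboloid $T(y,t)$. This is exactly the mechanism of Remark~\ref{rem:no intersection}.

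What your route buys is generality and transparency:
\begin{itemize}
\item Your left-projection argument never uses the shape of $S$. So for $k=1$, $\pi_L$ is a fold for every center surface.
\item After Sylvester's law of inertia, it also holds for every symmetric $A$ of signature $(1,n-1)$, not only diagonal $\pm1$ matrices.
\item Definiteness of $\hess(q)$ is needed only for $\pi_R$, which you treat explicitly, whereas the paper leaves it implicit.
\item It explains, rather than just observes, why $k>1$ produces cusps.
\end{itemize}
The paper's approach, by contrast, is checkable line by line in coordinates. However, it is tied to the hemisphere and needs case-splitting on where the positive eigenvalue sits.
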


\begin{proof} 
First we assume that the $1$ entry is in the first slot, so $A=\diag(1,-1,\dots,-1)$.  

Repeating the calculations as before, we see from the discussion below  \eqref{DpiL-det-equation} that 
\[
\langle -\n q,1\rangle \delta x=r_n\langle -\n q , 1\rangle A^{-1}\langle -\n q ,1\rangle^{T}.
\]
For the case of $A$ and $q$ at hand, we have 
\[
\begin{aligned} 
\langle -\n q,1\rangle \delta x&=r_n\langle \frac{y_1}{\sqrt{1-|\vy|^2}},\frac{y_2}{\sqrt{1-|\vy|^2}},\cdots,\frac{y_{n-1}}{\sqrt{1-|\vy|^2}},1\rangle\begin{pmatrix}
1 & 0 & \cdots& 0\\
0&-1 & \cdots &0\\
\vdots & \vdots & \ddots & 0\\
0&0&\cdots &-1
\end{pmatrix}\\ &\qquad\qquad\cdot\langle \frac{y_1}{\sqrt{1-|\vy|^2}},\frac{y_2}{\sqrt{1-|\vy|^2}},\cdots,\frac{y_{n-1}}{\sqrt{1-|\vy|^2}},1\rangle^{T}\\
&\qquad\qquad\qquad =r_n\lb \frac{2y_1^2-1}{1-|\vy|^2}\rb.
\end{aligned} 
\]
Consider the intersection of the hyperplanes $y_1=\pm \frac{1}{\sqrt{2}}$ with $\tilde{S}$. In the complement of this intersection, we have $r_n=0$ and following the previous calculation, we have that $ \pi_{L}$ is a fold.

Next, let us consider the points corresponding to $y_1=\pm \frac{1}{\sqrt{2}}$ on $\tilde{S}$. We again recall that $\Sigma$ is characterized by the set of points 
\[\Sigma=\{ (\vy,\vx,\o):  (-(\nabla q)^T(\vy), 1) \cdot (x_1-y_1,\cdots, x_{n-1}-y_{n-1}, x_n-q(y))^T=0    \}.\]
In our setup, we then have
\[
\Sigma=\{((\vy,\vx,\o): (x_1-y_1)\frac{y_1}{\sqrt{1-|\vy|^2}}+\cdots+(x_{n-1}-y_{n-1})\frac{y_{n-1}}{\sqrt{1-|\vy|^2}}+(x_n-\sqrt{1-|\vy|^2})=0\}.
\]
Choosing $y_1=\frac{1}{\sqrt{2}}$, the set $\Sigma$ is characterized by 
\Beq\label{Eq4.3A}
x_1+\sqrt{2}\sum\limits_{i=2}^{n-1}x_i y_i+x_n\sqrt{1-2|\vy'|^2}=\sqrt{2}.
\Eeq
Here $|\vy'|^2=\sum\limits_{i=2}^{n-1} y_i^2$.

We also have that $\vx$ should lie on the surface 
\[
\lb x_1-\frac{1}{\sqrt{2}}\rb^2-(x_2-y_2)^2-\cdots-(x_{n-1}-y_{n-1})^2-\lb x_n-\frac{\sqrt{1-2|\vy'|^2}}{\sqrt{2}}\rb^2=t.
\]
We substitute 
\[
x_1=\sqrt{2}-\sqrt{2}\sum\limits_{i=2}^{n-1}x_i y_i-x_n\sqrt{1-2|\vy'|^2}
\]
into the above equation. We get
\[
\lb \sqrt{2}-\sqrt{2}\sum\limits_{i=2}^{n-1}x_i y_i-x_n\sqrt{1-2|\vy'|^2}-\frac{1}{\sqrt{2}}\rb^2 -(x_2-y_2)^2-\cdots-(x_{n-1}-y_{n-1})^2-\lb x_n-\frac{\sqrt{1-2|\vy'|^2}}{\sqrt{2}}\rb^2=t.
\]
Rearranging this, we have
\[
\lb \frac{1}{\sqrt{2}}-\sqrt{2}\sum\limits_{i=2}^{n-1}x_i y_i-x_n\sqrt{1-2|\vy'|^2}\rb^2 -(x_2-y_2)^2-\cdots-(x_{n-1}-y_{n-1})^2-\lb x_n-\frac{\sqrt{1-2|\vy'|^2}}{\sqrt{2}}\rb^2=t.
\]
Simplifying this, we get,
\[
2\lb \sum\limits_{i=2}^{n-1} x_i y_i\rb^2-2x_n^2|\vy'|^2+2\sqrt{2}x_n \lb \sum\limits_{i=2}^{n-1} x_i y_i\rb\sqrt{1-2|\vy'|^2}-\sum\limits_{i=2}^{n-1} x_i^2=t.
\]
Adding and subtracting $x_n^2$, we then have 
\[
2\lb \sum\limits_{i=2}^{n-1} x_i y_i\rb^2+x_n^2\lb 1-2|\vy'|^2\rb+2\sqrt{2}x_n \lb \sum\limits_{i=2}^{n-1} x_i y_i\rb\sqrt{1-2|\vy'|^2}-\sum\limits_{i=2}^{n} x_i^2=t.
\]
From this, we get,
\Beq\label{Eq4.3AA}
\lb x_n\sqrt{1-2|\vy'|^2}+\sqrt{2}\sum\limits_{i=2}^{n-1} x_i y_i\rb^2-\sum\limits_{i=2}^{n} x_i^2=t.
\Eeq
We can view the first term in \eqref{Eq4.3AA} as a dot product of the two vectors \[A=(x_2,\cdots,x_n)\ \ \text{and}\ \ B=(\sqrt{2}y_2,\cdots,\sqrt{2}y_{n-1},\sqrt{1-2|\vy'|^2}).\] 
We note that 
\[
\lVert B\rVert^2=2y_2^2+ \cdots +2y_{n-1}^2 + 1-2|\vy'|^2=1.
\]

Using Cauchy-Schwartz inequality, we get that $t\leq 0$ in \eqref{Eq4.3AA} and this is a contradiction. Therefore, in this case as well, we don't need to consider the exceptional points on $\tilde{S}$ corresponding to $y_1=\pm \frac{1}{\sqrt{2}}$.

We now prove that the theorem is true when $A$ has the $1$ in the $k\th$
diagonal entry for $k\in\{2,\dots, n-1\}$.  Assume
$A=\diag(-1,\dots,-1,1,-1,\dots,-1)$ where the entry of $1$ is in the
$k^\text{th}$ place. We claim that one can reduce this to the case
$\tilde{A}=\diag(1,-1,\dots,-1)$ and the same manifold $\tilde{S}$,
the top half of the unit sphere. The proof is just a permutation
argument. 

Let $L$ be the $n\times n$ matrix that switches the first row and
$k\th$ row of the identity matrix, and let $L'$ be the $(n-1)\times (n-1)$ matrix obtained
from $L$ by deleting the $n\th$ row and the $n\th$ column of $L$.

We define a change of coordinates on $X$ using $L$ and on $\Omega$ using
$L'$: for $\vx\in X$ and $\vy\in \Omega$, \[\vx\mapsto L\vx=
(x_k,x_2,\dots,x_1,\dots,x_n),\ \ (\vy,t)\mapsto ( L'\vy,t).\]

Therefore, in the new coordinates $A$ becomes $\tilde{A}=LAL =
\diag(1,-1,\dots,-1)$. Because $L$ permutes $x_1$ and $x_k$ and $k<n$,
$q$ does not change, and $L\tilde{S}=\tilde{S}$.

Finally, we need to show that $\pi_L$ and $\pi_R$ have the same
properties in the new coordinates as in the old.  However, this
follows from the fact that the change of coordinates on  $T^*(X)$ is
$(\vx,\xi)\mapsto (L\vx,L\xi)$ and the equivalent change of
coordinates  in $T^*(\Omega)$ is $(\vy,t,\eta',\eta_n)
\mapsto (L'\vy,t,L'\eta',\eta_n)$.

The final case to consider for $\tilde{S}$
is when $A=\diag(-1,\dots,-1,1)$.
For the case of $A$ and $q$ at hand, we have 
\[
\begin{aligned} 
\langle -\n q,1\rangle \delta x&=r_n\langle \frac{y_1}{\sqrt{1-|\vy|^2}},\frac{y_2}{\sqrt{1-|\vy|^2}},\cdots,\frac{y_{n-1}}{\sqrt{1-|\vy|^2}},1\rangle\begin{pmatrix}
-1 & 0 & \cdots& \cdots&0\\
0&-1 & \cdots &\cdots&0\\
0&0&-1 & \cdots &0\\
\vdots & \vdots  &\vdots&\ddots & 0\\
0&0&\cdots &\cdots&1
\end{pmatrix}\\ &\qquad\qquad\cdot\langle \frac{y_1}{\sqrt{1-|\vy|^2}},\frac{y_2}{\sqrt{1-|\vy|^2}},\cdots,\frac{y_{n-1}}{\sqrt{1-|\vy|^2}},1\rangle^{T}\\
&\qquad\qquad\qquad =r_n\lb \frac{1-2|\vy|^2}{1-|\vy|^2}\rb.
\end{aligned} 
\]
Choosing $|\vy|^2=\frac{1}{2}$, the set $\Sigma$ is characterized by 
\[
\sum\limits_{i=1}^{n-1}x_i y_i+\frac{x_n}{\sqrt{2}}=1.
\]
We also have that $\vx$ should lie on the surface 
\[
-\lb x_1-y_1\rb^2-\lb x_2-y_2\rb^2-\cdots-(x_{n-1}-y_{n-1})^2+\lb x_n-\frac{1}{\sqrt{2}}\rb^2=t.
\]
Then we have 
\[
-\lb x_1-y_1\rb^2-\lb x_2-y_2\rb^2-\cdots-(x_{n-1}-y_{n-1})^2+\lb \frac{1}{\sqrt{2}}-\sqrt{2}\sum\limits_{i=1}^{n-1} x_i y_i\rb^2=t.
\]
Expanding, we get,
\[
-\sum\limits_{i=1}^{n-1} x_i^2 -\sum\limits\limits_{i=1}^{n-1} y_i^2+2\sum\limits_{i=1}^{n-1} x_i y_i+\frac{1}{2}+2\lb \sum\limits_{i=1}^{n-1} x_i y_i\rb^2-2\sum\limits_{i=1}^{n-1} x_i y_i =t.
\]
Using the fact that $|\vy|^2=\frac{1}{2}$ and using Cauchy-Schwartz inequality, 
\[
\begin{aligned}
&-\sum\limits_{i=1}^{n-1} x_i^2 -\sum\limits\limits_{i=1}^{n-1} y_i^2+2\sum\limits_{i=1}^{n-1} x_i y_i+\frac{1}{2}+2\lb \sum\limits_{i=1}^{n-1} x_i y_i\rb^2-2\sum\limits_{i=1}^{n-1} x_i y_i\\
&=-\sum\limits_{i=1}^{n-1} x_i^2 +2\lb \sum\limits_{i=1}^{n-1} x_i y_i\rb^2\leq -2\sum\limits_{i=1}^{n-1} x_i^2\leq 0.
\end{aligned} 
\]
This contradicts the fact that $t>0$ and finishes the proof.\end{proof}

\section{Case when $A$ is diagonal, indefinite and $S$ is a cylinder} \label{sec:cylindrical-geom}

\par Without loss of generality, we assume $S$ is the top half of the cylinder defined by \bel{cylinder q}y_n = q(\vy)= \sqrt {1-y_2^2-\cdots-y_{n-1}^2}\ \ \text{where}\ \ \vy = (y_1,\dots,y_{n-1})\in \Omega = \sparen{\vy\in \rnm\st y_2^2+\cdots y_{n-1}^2<1}.\ee As in the previous
section, we distinguish two cases, $k=1$ and $n> k > 1$ and we obtain
similar results.

\subsection{Case $k >1$}\label{sec:k>1 cylinder}

\begin{theorem}\label{thm:k>1 cylinder}  Let  $\Rc: \Ec'(\rr^n\setminus S)
\rightarrow \Dc'(\Omega \times(0,\infty)$ be the Radon transform defined by \eqref{def:psi} and \eqref{def:R}. Let  $A=\diag(1,\dots,1,-1,\dots,-1)$ be the  diagonal matrix with the first $k\in \{2,\dots,n-1\}$  diagonal entries  of $1$ and the rest  $-1$, and $\tilde{S}$ is given by \eqref{cylinder q},  then $\Rc$ is an FIO  associated to a canonical relation with $\pi_L$ a fold singularity and $\pi_R$  a cusp singularity.
\end{theorem}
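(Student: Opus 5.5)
The plan is to follow the template of Theorems \ref{thm:pos def} and \ref{thm:k>1 + evals}. The general computations of Section \ref{sec:definite} still apply. $\det(\D\pi_L)$ and $\det(\D\pi_R)$ vanish simply on $\Sigma$ from \eqref{Eq}, and $\Sigma$ is now the set where $\vx$ lies in the tangent plane to the cylinder at $\vs$. The kernels are as before.
\begin{itemize}
\item $\ker(\D\pi_L)$ is spanned by $v=A^{-1}\vr$, with $\vr=r_n(-\nabla q,1)$ and $r_n\neq 0$.
\item $\ker(\D\pi_R)$ is spanned by $\paren{\vx'_T/\o,1}$ in the $(\vy,\o)$ variables.
\end{itemize}
For the cylinder \eqref{cylinder q} we have $\PD_{y_1}q=0$ and $\PD_{y_i}q=-y_i/q$ for $i\ge 2$. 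Also $\hess(q)$ has the $y_1$ direction in its kernel and is negative definite on the $(y_2,\dots,y_{n-1})$ block. Everything therefore reduces to two scalar quantities, $\langle -\n q,1\rangle A^{-1}\langle-\n q,1\rangle^T$ for $\pi_L$ and ${\vx'}_T^T\hess(q)\vx'_T$ for $\pi_R$, as in \eqref{Eq 1}.

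\emph{Right projection.} By \eqref{Eq 1}, $\ker(\D\pi_R)$ is transversal to $\Sigma$ unless ${\vx'}_T^T\hess(q)\vx'_T=0$. Since the Hessian is definite on the last $n-2$ coordinates of $\vy$, this happens exactly when $x_i=y_i$ for $2\le i\le n-1$. On $\Sigma$ this also forces $x_n=q(\vy)$. So the degenerate set is
\[
\Sigma_1^R=\{(\vy,\vx,\o)\in\Sigma:\ \vx_T\parallel e_1\},
\]
and it is nonempty for $t>0$ because $t=(x_1-y_1)^2$ there. I will take $K$ to be the defining function of $\Sigma$, $v$ the kernel field of $\D\pi_R$, and
\[
G=v(K)=\tfrac1\o\,{\vx'}_T^T\hess(q)\vx'_T .
\]
I then check that $\D G$ restricted to $T\Sigma$ is nonzero at points of $\Sigma_1^R$; this is the simple-zero condition. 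Here $G$ is a negative definite quadratic form in $(x_2-y_2,\dots,x_{n-1}-y_{n-1})$, so its first derivatives vanish on $\Sigma_1^R$. The simple-zero check therefore has to be done in the sense of the $S_{1,1}$ description. I would show that $\D(\det \D\pi_R)$ and $\D G$ are independent after an appropriate choice of $K$, or else use the normal-form criterion with $h$. This is where I expect delicate bookkeeping.

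\emph{Left projection.} A direct computation gives
\[
\langle -\n q,1\rangle A^{-1}\langle-\n q,1\rangle^T=\frac{2\sum_{i=2}^k y_i^2-1}{q^2}.
\]
Away from $\{2\sum_{i=2}^k y_i^2=1\}$ the argument of Theorem \ref{thm:pos def} gives a fold. On that exceptional set I would imitate the proof of Theorem \ref{thm:diag strictly cvx}. The steps are:
\begin{enumerate}
\item Substitute the tangent-plane equation into $\vx_T^TA\vx_T=t$.
\item Try to rewrite the result as $(\text{unit vector}\cdot z)^2-|z|^2$ plus terms in $x_1-y_1$.
\item Apply Cauchy--Schwarz to conclude $t\le0$, so that these points do not lie on $C$.
\end{enumerate}

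This exclusion is the main obstacle, and I am not confident it works. The vector $e_1$ lies in the tangent plane and $e_1^TAe_1=1>0$, so the $(x_1-y_1)^2$ term may defeat the Cauchy--Schwarz bound. If it does, the exceptional set genuinely lies on $C$. In that case I would instead verify, at exactly those points, whether the kernel is transversal to $\Sigma$. To do so I would recompute with the extra $\PD_{y_1}$ degeneracy and possibly restrict $\Omega$ so that $2\sum_{i=2}^k y_i^2\neq1$. I would do this before claiming that $\pi_L$ is a fold everywhere.
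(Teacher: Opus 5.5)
Your setup is sound: the kernels, the scalar $(2\sum_{i=2}^k y_i^2-1)/q^2$, and the structure of $\hess(q)$ are all correct. But you are aiming at the conclusion as printed ($\pi_L$ fold, $\pi_R$ cusp), and neither half of it holds. The paper's own proof of this theorem establishes the interchanged assignment: $\pi_L$ has cusp points and $\pi_R$ is a fold. So the printed statement has the two projections swapped, and the two steps you flagged as doubtful are exactly where this shows up.

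\textbf{Right projection.} You overlooked the standing hypothesis that $\Rc$ acts on $\Ec'(\rr^n\setminus S)$. On your set $\Sigma_1^R$ one has $x_i=y_i$ for $2\le i\le n-1$ and $x_n=q(y')$, so $x=(x_1,y_2,\dots,y_{n-1},q(y'))$. Since $q$ does not depend on $y_1$ (the rulings of the cylinder are parallel to $e_1$), this point lies on $S$. Hence $\Sigma_1^R$ does not meet $C$, the quantity $\frac{1}{\omega}{x'_T}^{T}\hess(q)\,x'_T$ from \eqref{Eq 1} never vanishes on $\Sigma$, and $\pi_R$ is a fold everywhere. This is precisely the paper's argument. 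Even if those points were admitted, they could not be cusps. As you noticed, $v(K)$ vanishes to second order there, and the simple-zero condition is intrinsic: replacing $K$ by $aK$ and $v$ by $bv$ with $a,b\neq 0$ only multiplies $v(K)|_\Sigma$ by $ab$. So no choice of $K$ makes $\D G$ and $\D K$ independent.

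\textbf{Left projection.} Your suspicion is correct and decisive: the Cauchy--Schwarz exclusion fails. Fix $y'$ with $2\sum_{i=2}^k y_i^2=1$ and let $x_T=a e_1+w$, where $0\neq w\in\mathrm{span}\{e_2,\dots,e_n\}$ is orthogonal to $(0,y_2,\dots,y_{n-1},q)$. Then $x$ lies in the tangent plane. Also $x\notin S$, because its last $n-1$ coordinates have norm $\sqrt{1+|w|^2}>1$. Finally $t=a^2+w^TAw>0$ for $|a|$ large; for $n=3$, $k=2$, $y_2=1/\sqrt2$, $w=c(0,1,-1)$ one gets exactly $t=a^2$. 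At these points $\D K(v)=r_n(2\sum_{i=2}^k y_i^2-1)/q^2=0$, so $\ker \D\pi_L\subset T\Sigma$ and $\pi_L$ is not a fold. Re-checking transversality cannot change this, and shrinking $\Omega$ alters the theorem without producing a cusp for $\pi_R$.

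The missing step is the paper's: treat $\Sigma_1=\Sigma\cap\{2\sum_{i=2}^k y_i^2=1\}$ as a cusp set. Take $r_n\equiv 1$. Then $G=v(K)=\langle-\nabla q,1\rangle A^{-1}\langle-\nabla q,1\rangle^T$ depends on $y'$ only, and
$\nabla_{y'}G=2\hess(q)\,(\PD_{y_1}q,\dots,\PD_{y_k}q,-\PD_{y_{k+1}}q,\dots,-\PD_{y_{n-1}}q)^T$.
The first row and column of $\hess(q)$ vanish and its $(y_2,\dots,y_{n-1})$ block is definite. Moreover $(\PD_{y_2}q,\dots,\PD_{y_{n-1}}q)\neq0$ on $\Sigma_1$ because $\sum_{i=2}^k y_i^2=\tfrac12$, so $\nabla_{y'}G\neq 0$. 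Since $\D K$ has $\D x_n$-coefficient $1$ and $\D G$ has none, $G|_\Sigma$ has a simple zero, and $\Sigma_1$ consists of cusp points of $\pi_L$.

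The provable conclusion, and the one the paper's proof actually reaches, is therefore: $\pi_L$ is a fold off $\Sigma_1$ with cusps on $\Sigma_1$, and $\pi_R$ is a fold.
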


\begin{proof}
\par
We will let $q$ be an arbitrary smooth function for now and specialize the calculation for the case of a cylinder at the very end. 

Then 
\[
\langle -\n q,1\rangle A^{-1}\langle -\n q,1\rangle^{T}=\sum\limits_{i=1}^{k}|\PD_{y_i}q|^2-\sum\limits_{i=k+1}^{n-1}|\PD_{y_i}q|^2-1.
\]
As before, if we exclude the points where $\langle -\n q,1\rangle A^{-1}\langle -\n q,1\rangle^{T}=0$, then on this set, we have fold points. 

Let us now consider the subset $\Sigma_1\subset \Sigma$, where
$\Sigma$ is defined in \eqref{Eq}, by 
\[
\Sigma_1=\{(\vy,\vx,\o) \in \Sigma:  \sum\limits_{i=1}^{k}|\PD_{y_i}q|^2-\sum\limits_{i=k+1}^{n-1}|\PD_{y_i}q|^2=1\}.
\]
We would like to show that $\Sigma_1$ consists of cusp points for $\pi_L$. We first show that for a point $p\in \Sigma_1$, $\ker(\D \pi_L)_p\subset T\Sigma_p$.  

As before, we only need to consider a vector field in $\ker(\D \pi_L)$ of the form $(0, v_1,\cdots, v_{n},0)$. We then have, for each $1 < i\leq k$, 
\[
v_i=v_n\PD_{y_i} q 
\]
and for $k+1\leq i\leq n-1$, 
\[
v_{i}=-v_n \PD_{y_i} q.
\]
Hence $v=(v_1,\cdots, v_n)$ is of the form 
\Beq\label{Eq6.1A}
v=(v_n\PD_{y_1}q,\cdots, v_n\PD_{y_k}q,-v_n \PD_{y_{k+1}}q,\cdots,-v_n\PD_{y_{k+1}}q).
\Eeq
If such a $v\in T\Sigma_p$, we must have for $K$ the defining
function in \eqref{Eq}, 
\[
\D K(v)=v(K)=v_n\lb -\sum\limits_{i=1}^{k}|\PD_{y_i} q|^2+\sum\limits_{i=k+1}^{n-1}|\PD_{y_{i}}q|^2+1\rb.
\]
But this is $0$. Here we note that $v_n$ is a non-vanishing smooth function of $(\vy,\vx,\o)$. 

Next,  we check the simple zero condition from the definition of a cusp. Let us
take $v$ to be a smooth vector field along $\Sigma$ such that $v\in
\ker(\D \pi_L)$ at each point of $\Sigma$. Then $v$ has exactly the form given in \tred{\eqref{Eq6.2}} with the $\vy$ and $\o$ components being $0$. Define 
\[
G(\vy,\vx,\o):=\D K(v)= v(K)=v_n\lb -\sum\limits_{i=1}^{k}|\PD_{y_i} q|^2+\sum\limits_{i=k+1}^{n-1}|\PD_{y_{i}}q|^2+1\rb.
\]
Here $v_n$ is a smooth function of $(\vy,\vx,\o)$ non-vanishing. 
Then  for $1 < k\leq n-1$,
\[
\PD_{y_k}G(\vy,\vx,\o)=\frac{\PD v_n}{\PD y_i}\lb -\sum\limits_{i=1}^{k}|\PD_{y_i} q|^2+\sum\limits_{i=k+1}^{n-1}|\PD_{y_{i}}q|^2+1\rb+2v_{n}\lb -\sum\limits_{i=1}^{k} \frac{\PD^2 q}{\PD y_i \PD y_k}\PD_{y_i}q+\sum\limits_{i=k+1}^{n-1} \frac{\PD^2 q}{\PD y_i \PD y_k}\PD_{y_i}q\rb.
\]
At points on $\Sigma_1$, the first term vanishes, and since $v_n$ is non-vanishing, we have a system of equations of the form 
\[
\begin{pmatrix}
    \frac{\PD^2 q}{\PD y_1^2}&\cdots & \frac{\PD^2 q}{\PD y_1 \PD y_{n-1}}\\
    \vdots &\ddots & \vdots\\
    \vdots &\ddots & \vdots\\
    \vdots &\ddots & \vdots\\
    \frac{\PD^2q}{\PD y_{1} \PD y_{n-1}}&\cdots & \frac{\PD^2 q}{\PD y_{n-1}^2}
\end{pmatrix}\begin{pmatrix}-\PD_{y_1}q\\ \vdots \\-\PD_{y_{k}}q\\\PD_{y_{k+1}}q\\ \vdots\\ \PD_{y_{n-1}}q
\end{pmatrix} = 
\frac{1}{2v_n}\begin{pmatrix} \PD_{y_1} G\\ \vdots \\\vdots \\ \vdots\\ \PD_{y_{n-1}}G
\end{pmatrix}.
\]
If we assume that the function $q$ locally describes a cylinder,  then the matrix system becomes the following:  
\[
\begin{pmatrix}
   0&\cdots& \cdots & 0\\
    \vdots &\frac{\PD^2 q}{\PD y_2^2} &\ddots & \frac{\PD^2 q}{\PD y_2 y_{n-1}}\\
    \vdots &\vdots &\ddots & \vdots\\
    \vdots &\vdots &\ddots & \vdots\\
    0&\frac{\PD^2 q}{\PD y_2 y_{n-1}} &\cdots& \frac{\PD^2 q}{\PD y_{n-1}^2}
\end{pmatrix}\begin{pmatrix}-\PD_{y_1}q\\ \vdots \\-\PD_{y_{k}}q\\\PD_{y_{k+1}}q\\ \vdots\\ \PD_{y_{n-1}}q
\end{pmatrix} = 
\frac{1}{2v_n}\begin{pmatrix} \PD_{y_1} G\\ \vdots \\\vdots \\ \vdots\\ \PD_{y_{n-1}}G
\end{pmatrix}.
\]

Also, $\n q$ is non-vanishing, since on the level set we are interested in, that is on $\Sigma_1$, we have 
\[
-\sum\limits_{i=2}^{k}|\PD_{y_i} q|^2+\sum\limits_{i=k+1}^{n-1}|\PD_{y_{i}}q|^2+1=0.
\]
Hence not all components of $\PD_{y_i}q$ can be $0$ on $\Sigma_1$. 
Therefore, at least one of $\PD_{y_i} G$ for $2\leq i\leq n$ is non-vanishing at points on $\Sigma_1$, since the reduced matrix on the left obtained by removing the first row and column is positive definite, showing that $\Sigma_1$ consists of cusp points for $\pi_L$. 
\par Next, analyzing the right projection $\pi_R$, we have that this map has a fold singularity. We have as in,  \eqref{Eq 1}  $\frac{1}{\omega} {\vx'}_T^T \hess (q) \vx'_T \mbox{ where } \vx'=(x_1, x_2, \dots, x_{n-1}).$ Since the reduced $\mbox{Hess}(q)$ obtained by removing the first row and column is positive definite, if $x_T' \neq 0$, then we get a fold singularity. Otherwise, if $x_T'=0$, then,  $x_2=y_2,\cdots, x_{n-1}=y_{n-1}$. But this would imply that $x_{n}=q(\vx')$, so that $S$ and $\mbox{supp}(f)$ have non-empty intersection, contradicting our assumption. Hence, we have a fold singularity for the right projection, even in the general case, as in the remark below.

\end{proof}
\begin{remark}
    As it turns out, we did not use the specific structure of a cylinder anywhere. As long as the reduced matrix as specified above is strictly positive definite, the same argument as above goes through. 
\end{remark}

\subsection{Case $k=1$, $S$ a cylinder}\label{sec:k=1 cylinder}
\begin{theorem}\label{thm:k=1 cylinder}  Let  $\Rc: \Ec'(\rr^n\setminus S)
\rightarrow \Dc'(\Omega \times(0,\infty))$ be the Radon transform defined by \eqref{def:psi} and \eqref{def:R}. If $A$ is a diagonal matrix,  with $k=1$ eigenvalue of $1$ and  $ n-1,\ -1$  eigenvalues on the main diagonal, and $S$ is the half-cylinder given by \eqref{cylinder q}, then $\Rc$ is an FIO  associated to a two-sided fold canonical relation. 
\end{theorem}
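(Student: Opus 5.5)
The plan is to follow the structure of the proof of Theorem \ref{thm:diag strictly cvx}, using the half-cylinder $q(\vy)=\sqrt{1-y_2^2-\cdots-y_{n-1}^2}$. Write $\vy''=(y_2,\dots,y_{n-1})$. Then $\PD_{y_1}q=0$ and $\PD_{y_i}q=-y_i/q$ for $2\le i\le n-1$.

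\textbf{Right projection.} I would first dispose of $\pi_R$ exactly as at the end of the proof of Theorem \ref{thm:k>1 cylinder}; that argument never used the signature of $A$. The quantity \eqref{Eq 1} is $\frac1\omega {\vx'}_T^T\hess(q)\vx'_T$. For the half-cylinder, $\hess(q)$ has a zero first row and column, and its reduced block in $(y_2,\dots,y_{n-1})$ is negative definite. So \eqref{Eq 1} is nonzero unless $x_i=y_i$ for $2\le i\le n-1$. Two issues need checking here.
\begin{itemize}
\item The direction $x_1-y_1$ is not seen by $\hess(q)$. I will need to check whether the kernel condition on $\Sigma$ still forces $\vx\in S$ in that case. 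If $x_i=y_i$ for $i\ge2$, then on $\Sigma$ we have $x_n=q(\vy)=q(\vx')$, since $q$ does not depend on $y_1$. Hence $\vx\in S$, which contradicts $S\cap\operatorname{supp} f=\emptyset$.
\item Therefore $\pi_R$ is a fold. The rank drop being simple is the same computation as in Theorem \ref{thm:pos def}.
\end{itemize}

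\textbf{Left projection: where folds can fail.} From the analysis after \eqref{DpiL-det-equation}, $\pi_L$ is a fold wherever
\[
Q:=\langle-\n q,1\rangle A^{-1}\langle-\n q,1\rangle^T\neq 0,
\]
so the task is to show that $Q=0$ cannot occur on $C$ for $t>0$. I split according to the position of the entry $1$ in $A$.
\begin{itemize}
\item \emph{Entry $1$ in slot $1$.} Then $Q=-|\vy''|^2/q^2-1<0$, so there are no exceptional points.
\item \emph{Entry $1$ in slot $k\in\{2,\dots,n-1\}$.} Here $Q=(2y_k^2-1)/q^2$, which vanishes exactly on $y_k=\pm1/\sqrt2$. (A permutation of $y_2,\dots,y_{n-1}$ preserves the cylinder, so one could instead reduce to $k=2$ as in the permutation argument in the proof of Theorem \ref{thm:diag strictly cvx}.)
\item \emph{Entry $1$ in slot $n$.} Here $Q=(1-2|\vy''|^2)/q^2$, which vanishes on $|\vy''|^2=1/2$.
\end{itemize}

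\textbf{Left projection: ruling out the exceptional points.} In each of the last two cases I will combine the equation of $\Sigma$, namely $\sum_{i\ge2}(x_i-y_i)y_i+q\,(x_n-q)=0$, with the surface equation $\vx_T^TA\vx_T=t$. The aim is to show $t\le0$, which contradicts $t>0$, so the exceptional points do not occur on $C$. The mechanism is the Cauchy--Schwarz step used around \eqref{Eq4.3AA}.
\begin{itemize}
\item Solve the $\Sigma$ equation for the coordinate carrying the $+1$ entry ($x_k$, respectively $x_n$) and substitute.
\item Complete the square so that $t=(\text{unit vector}\cdot z)^2-|z|^2-(x_1-y_1)^2$ for a suitable vector $z$ of shifted coordinates.
\item The extra term $-(x_1-y_1)^2$ comes from the flat direction. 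It is nonpositive, so it only helps.
\end{itemize}

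The main obstacle is this algebra in the slot-$k$ case. The unit vector there has the form $(\sqrt2 y_i,\ \sqrt{1-2|\cdot|^2})$, and the translation by $\vs$ must be absorbed correctly so that the completed square is exactly a Cauchy--Schwarz defect. I would first try the substitution $\tilde x_i=x_i-y_i$, for which $\Sigma$ becomes linear and homogeneous, and check the norm-one identity directly, as was done for the sphere.
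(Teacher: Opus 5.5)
Your proposal is correct and follows the paper's route. You use the same three-way split on the position of the $+1$ entry, giving $Q=-1/q^2$, $(2y_k^2-1)/q^2$ and $\bigl(1-2\sum_{i=2}^{n-1}y_i^2\bigr)/q^2$, and the same Cauchy--Schwarz argument combining the equation of $\Sigma$ with $x_T^TAx_T=t$ to get $t\le -(x_1-y_1)^2\le 0$ at the exceptional points. Your explicit treatment of $\pi_R$ (which the paper's proof of this theorem leaves implicit) and the shifted coordinates $\tilde x_i=x_i-y_i$ (which make the unit-vector Cauchy--Schwarz step immediate) only streamline that argument.
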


\begin{proof}

\par Consider a diagonal matrix $A$ with $n-1$ eigenvalues being $-1$ and one eigenvalue being $1$.   Let $S$ be the smooth hypersurface describing a cylinder. We take $S$ locally described by the function $q(\vy)=\sqrt{1-y_2^2-\cdots-y_{n-1}^2}$ in the sense that $S=\{(y_1,y_2,\cdots, y_{n-1},\sqrt{1-y_2^2-\cdots-y_{n-1}^2}\}$ with $y_1$ arbitrary.

Let
\[ A=
\begin{pmatrix}
-1 & 0 & \cdots& 0\\
0&-1 & \cdots &0\\
\vdots & \vdots & \ddots & 0\\
0&0&\cdots &1
\end{pmatrix}.
\]

Then as before 

\[
\begin{aligned} 
\langle -\n q,1\rangle \delta x&=r_n\langle {0,\frac{y_2}{\sqrt{1-|\vy|^2}},\cdots,\frac{y_{n-1}}{\sqrt{1-|\vy|^2}},1}\rangle\begin{pmatrix}
-1 & 0 & \cdots& 0\\
0&-1 & \cdots &0\\
\vdots & \vdots & \ddots & 0\\
0&0&\cdots &1
\end{pmatrix}\\ &\qquad\qquad\cdot\langle 0,\frac{y_2}{\sqrt{1-|\vy|^2}},\cdots,\frac{y_{n-1}}{\sqrt{1-|\vy|^2}},1\rangle^{T} \\
&\qquad\qquad\qquad =r_n  \frac{1-2(\sum\limits_{i=2}^{n-1} y_i^2)}{q^2}  
\end{aligned} 
\]

\par If $ \sum\limits_{i=2}^{n-1} y_i^2\neq 1/2$, then $\pi_L$ is a
fold. If $\sum\limits_{i=2}^{n-1} y_i^2=1/2$, then
$q=\frac{1}{\sqrt{2}}$ and the set $\Sigma$ from \tred{\eqref{Eq}} becomes

\Beq\label{Eqs5.1}
\begin{aligned} 
\Sigma&=
\{ (\vy,\vx,\o):  (-(\nabla q)^T(y), 1) \cdot (x_1-y_1,\cdots, x_{n-1}-y_{n-1}, x_n-q(y))^T=0   \}\\
&=(x_2-y_2)\frac{y_2}{q} + \dots + (x_{n-1}-y_{n-1})\frac{y_{n-1}}{q} + (x_n-q) =0\\
&=\Bigg{\{} \sum\limits_{i=2}^{n-1} x_i y_i+ x_n q=1\Bigg{\}}.
\end{aligned} 
\Eeq
We consider the points on the surface: $\{ (x_n-q)^2-(x_{n-1}-y_{n-1})^2- \dots (x_1-y_1)^2=t \}$ for some $t>0$. The surface equation is then  
 \Beq\label{Eqs5.2} 
 \begin{aligned} 
 &\Bigg{\{} x_n^2-2x_nq+q^2-\sum\limits_{i=2}^{n-1} x_i^2 +2 \sum\limits_{i=2}^{n-1} x_iy_i -\sum\limits_{i=2}^{n-1}y_i^2-(x_1-y_1)^2=t  \Bigg{\}}\\
 &= \Bigg{\{}  x_n^2-4x_nq-\sum\limits_{i=2}^{n-1} x_i^2 +2-(x_1-y_1)^2=t\Bigg{\}}\\
 &=\Bigg{\{} (x_n-\sqrt{2})^2-\sum\limits_{i=2}^{n-1} x_i^2 -(x_1-y_1)^2=t\Bigg{\}}.
 \end{aligned} 
 \Eeq 
 From  \eqref{Eqs5.1}, 
 \Beq\label{Eq5.3}
 \frac{x_n}{\sqrt{2}}=1-\sum\limits_{i=2}^{n-1} x_i y_i.
 \Eeq
Slightly rewriting \eqref{Eqs5.2}, we get, 
 \[
 2\lb \frac{x_n}{\sqrt{2}}-1\rb^2-\sum\limits_{i=2}^{n-1} (x_1-y_1)^2=t. 
 \]
 Substituting \eqref{Eq5.3} above, we get, 
 \[
 2\lb \sum\limits_{i=2}^{n-1} x_i y_i\rb^2-\sum\limits_{i=2}^{n-1} x_i^2 -(x_1-y_1)^2=t. 
 \]
 Using the Cauchy-Schwarz inequality, we have 
 \[
 2\lb \sum\limits_{i=2}^{n-1} x_i y_i\rb^2\leq 2 \lb \sum\limits_{i=2}^{n-1} x_i^2\rb \lb \sum\limits_{i=2}^{n-1} y_i^2\rb =\sum\limits_{i=2}^{n-1} x_i^2,
 \]
 since $\sum\limits_{i=2}^{n-1} y_i^2=\frac{1}{2}$. 

Therefore,
\[
2\lb \sum\limits_{i=2}^{n-1} x_i y_i\rb^2-\sum\limits_{i=2}^{n-1} x_i^2 \leq 0. 
\]
This implies that $t\leq 0$, which is a contradiction. 

Locally, we have expressed the cylinder such that the $n^{\mathrm{th}}$ coordinate is written as a function of the $n-2$ variables $y_2,\cdots, y_{n-1}$. But one should consider other coordinates as well. Equivalently, we can interchange the coordinates if necessary so that $y_n$ is always written as the function of the variables $y_2,\cdots,y_{n-1}$. But then, we would have to consider $A$ with $1$ at the other diagonal entries. 

For instance, let us consider $A$ to be 
\[ A=
\begin{pmatrix}
1 & 0 & \cdots& 0\\
0&-1 & \cdots &0\\
\vdots & \vdots & \ddots & 0\\
0&0&\cdots &-1
\end{pmatrix}.
\]

In this case 
\[
\begin{aligned} 
\langle -\n q,1\rangle \delta x&=r_n\langle {0,\frac{y_2}{\sqrt{1-|\vy|^2}},\cdots,\frac{y_{n-1}}{\sqrt{1-|\vy|^2}},1}\rangle\begin{pmatrix}
1 & 0 & \cdots& 0\\
0&-1 & \cdots &0\\
\vdots & \vdots & \ddots & 0\\
0&0&\cdots &-1
\end{pmatrix}\\ &\qquad\qquad\cdot\langle 0,\frac{y_2}{\sqrt{1-|\vy|^2}},\cdots,\frac{y_{n-1}}{\sqrt{1-|\vy|^2}},1\rangle^{T} \\
&\qquad\qquad\qquad =-\frac{r_n}{q^2}. 
\end{aligned}
\]
Since this is $0$ if and only if $r_n=0$, we get a fold singularity. 
\par Using the same permutation argument as above in section 5.2,  it is enough to consider the following matrix: 

\[ A=
\begin{pmatrix}
-1 & 0 & \cdots& \cdots & 0\\
0&1 & \cdots &\cdots &0\\
0&0&-1&\cdots &0\\
\vdots & \vdots & \ddots &\vdots& 0\\
0&0&\cdots &\cdots &-1
\end{pmatrix}.
\]
In this case 
\[
\begin{aligned} 
\langle -\n q,1\rangle \delta x&=r_n\langle {0,\frac{y_2}{\sqrt{1-|\vy|^2}},\cdots,\frac{y_{n-1}}{\sqrt{1-|\vy|^2}},1}\rangle\begin{pmatrix}
-1 & 0 & \cdots& \cdots & 0\\
0&1 & \cdots &\cdots &0\\
0&0&-1&\cdots &0\\
\vdots & \vdots & \ddots &\vdots& 0\\
0&0&\cdots &\cdots &-1
\end{pmatrix}\\ &\qquad\qquad\cdot\langle 0,\frac{y_2}{\sqrt{1-|\vy|^2}},\cdots,\frac{y_{n-1}}{\sqrt{1-|\vy|^2}},1\rangle^{T} \\
&\qquad\qquad\qquad =r_n\frac{2y_2^2-1}{q^2}. 
\end{aligned}
\]
We let $y_2=\pm \frac{1}{\sqrt{2}}$. 
\[
\begin{aligned} 
\Sigma&=
\{ (\vy,\vx,\o):  (-(\nabla q)^T(y), 1) \cdot (x_1-y_1,\cdots, x_{n-1}-y_{n-1}, x_n-q(y))^T=0   \}\\
&=(x_2-y_2)\frac{y_2}{q} + \dots + (x_{n-1}-y_{n-1})\frac{y_{n-1}}{q} + (x_n-q) =0\\
&=\Bigg{\{} \sum\limits_{i=2}^{n-1} x_i y_i+ x_n q=1\Bigg{\}}.
\end{aligned} 
\]
We consider the points on the surface: $\{ (x_2-y_2)^2-(x_1-y_1)^2-(x_3-y_3)^2-\cdots-(x_{n-1}-y_{n-1})^2- (x_n-q)^2=t, t>0 \}$. 
Letting $y_2=\pm \frac{1}{\sqrt{2}}$, we have 
\[
\lb x_2\pm \frac{1}{\sqrt{2}}\rb^2-\sum\limits_{i=3}^{n-1} x_i^2-\sum\limits_{i=3}^{n-1} y_i^2+2\sum\limits_{i=3}^{n-1} x_i y_i -x_n^2-q^2+2x_n q-(x_1-y_1)^2=t.
\]
For simplicity, we consider $y_2=\frac{1}{\sqrt{2}}$. The same argument below works for the other case as well. Using the fact that $\sum\limits_{i=3}^{n-1} x_i y_i +x_n q=1-x_2 y_2=1-\frac{x_2}{\sqrt{2}}$, the above equality simplifies to 
\[
\lb x_2- \frac{1}{\sqrt{2}}\rb^2-\sum\limits_{i=3}^{n-1} x_i^2-\sum\limits_{i=3}^{n-1} y_i^2 +2-\sqrt{2}x_2 -x_n^2-q^2-(x_1-y_1)^2=t.
\]
Using the expression for $q^2$ combined with the fact that $y_2^2=\frac{1}{2}$, we get,
\Beq\label{Eq5.5}
(x_2-\sqrt{2})^2-\sum\limits_{i=3}^{n-1} x_i^2-x_n^2-(x_1-y_1)^2=t.
\Eeq
Using 
\[
\sum\limits_{i=2}^{n-1} x_i y_i + x_n q=1,
\]
we get
\[
\lb x_2-\sqrt{2}\rb^2 =2\lb x_n q +\sum\limits_{3}^{n-1} x_i y_i\rb^2.
\]
By the Cauchy-Schwarz inequality, we get
\[
\lb x_n q +\sum\limits_{3}^{n-1} x_i y_i\rb^2\leq \lb \sum\limits_{i=3}^{n} x_i^2 \rb \lb \sum\limits_{i=3}^{n-1} y_i^2 + q^2\rb=\lb \sum\limits_{i=3}^{n} x_i^2 \rb y_2^2= \frac{1}{2} \lb \sum\limits_{i=3}^{n} x_i^2 \rb.
\]
Therefore, we get, 
\[
(x_2-\sqrt{2})^2 \leq \sum\limits_{i=3}^{n} x_i^2.
\]
This implies $t\leq 0$ in \eqref{Eq5.5}, which is a contradiction. 
\end{proof}

\section{Paraboloids and more general  surfaces of
integration}\label{sec:paraboloids}

So far, we have considered surfaces defined by homogeneous second-order polynomials $\vxtt A \vxt=t$ where $A$ is symmetric and
invertible. In $\rthree$, this includes all non-degenerate real
quadrics except paraboloids. In this section, we study transforms
defined by more general second-order polynomials that are not
homogeneous. Let $k\in \{1,\dots,n-1\}$ and for $\vz\in \rn$, let
$\vz'=(z_1,\dots,z_k)$ and $\vz'' = (z_{k+1},\dots,z_n)$. We let $A'$
be a symmetric invertible $k\times k$ matrix and
$\vb=(b_{k+1},\dots,b_n) \in \rr^{n-k}\setminus \{\zero\}$ and
consider surface defined by \bel{more general} t-(\vxtt)' A'(\vxt)' +
\vb^T(\vxt)'' =0.\ee

We consider an example after stating and proving the main theorem of this
section.

\begin{theorem}[The case $k=n-1$]\label{thm:k=n-1} Let $A'$ be an $(n-1)\times (n-1)$ symmetric
invertible matrix and   let $b\in \rr\setminus \{0\}$.   Let $S$ be a
smooth surface in $\rn$ parametrized by $y_n = q(\vy)$ where
$q:\Omega\to\rr$, and $\Omega$
is an open subset of $\rr^{n-1}$.  Then, the Radon transform $\Rc:\Ec'(\rn\setminus S)\to \Dc'(\Omega\times (0,\infty))$ on surfaces
\bel{k=n-1} t-(\vxtt)'A'(\vxt)' -b(x_n-q) =0\ee with centers on
$S$ satisfies the Bolker condition.\end{theorem}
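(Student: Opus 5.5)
The plan is to show directly that $\pi_L$ is injective and has a smooth left inverse on its image. Since $C$ and $T^*(\Omega\times(0,\infty))$ both have dimension $2n$, this makes $\pi_L$ a diffeomorphism onto an open set, and in particular an injective immersion. Throughout I will use the same parametrization of $C$ by $(\vy,\vx,\o)$ as in Section~\ref{sec:A}, with $t$ eliminated through the defining equation.

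First I would write the phase in the form analogous to \eqref{def:psi}:
\[
\psi(\vy,t,\vx,\o)=\o\bigl(t-(\vxtt)'A'(\vxt)'-b(x_n-q(\vy))\bigr),\qquad \o\neq 0 .
\]
Stationarity in $\o$ gives $t=(\vxtt)'A'(\vxt)'+b(x_n-q(\vy))$, so $(\vy,\vx,\o)$ parametrize $C$. Differentiating gives
\[
\D_{\vy}\psi=\o\bigl(2A'(\vxt)'+b\nabla q(\vy)\bigr),\qquad \D_t\psi=\o,\qquad \D_\vx\psi=-\o\bigl(2A'(\vxt)',\,b\bigr).
\]
The second component of $\D_\vx\psi$ is $-\o b\neq 0$, so $C$ lies in $(T^*X\setminus 0)\times(T^*Y\setminus 0)$ and $\Rc$ is an FIO with this canonical relation. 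Hence
\[
\pi_L(\vy,\vx,\o)=\Bigl(\vy,\;(\vxtt)'A'(\vxt)'+b(x_n-q(\vy)),\;\o\bigl(2A'(\vxt)'+b\nabla q(\vy)\bigr),\;\o\Bigr).
\]

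Next I would invert this map. Suppose we are given an image point $(\vy,t,\eta',\tau)$.
\begin{itemize}
\item The first and last components give back $\vy$ and $\o=\tau\neq 0$.
\item Since $A'$ is invertible, the third component gives $(\vxt)'=\tfrac12 A'^{-1}\bigl(\eta'/\tau-b\nabla q(\vy)\bigr)$, and therefore $\vx'=\vy+(\vxt)'$.
\item Because $b\neq 0$, the second component gives $x_n=q(\vy)+\bigl(t-(\vxtt)'A'(\vxt)'\bigr)/b$.
\end{itemize}
Each of these expressions is smooth in $(\vy,t,\eta',\tau)$ with $\tau\neq 0$. So $\pi_L$ is injective with a smooth inverse $G$ on its image, and $G\circ\pi_L=\mathrm{id}$ forces $\D\pi_L$ to be injective.

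As a cross-check I would compute $\D\pi_L$ in block form, as in the proof of Theorem~\ref{thm:pos def}. Ordering variables as $(\delta\vy,\delta\vx',\delta x_n,\delta\o)$, the matrix is block triangular up to a permutation, and I expect
\[
\det \D\pi_L=\pm\, b\,2^{n-1}\o^{n-1}\det A'\neq 0 .
\]
This confirms the immersion property. It also shows, in contrast with \eqref{Det DPiL}, that there is no degeneracy hypersurface $\Sigma$ here: the linear term $b(x_n-q)$ is what replaces the quadratic dependence on $x_n$ that caused the rank drop before.

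The only step I expect to need care is bookkeeping rather than a genuine obstacle. I have to make sure the recovered $\vx$ lies in $\rn\setminus S$ and that the domain restrictions ($t>0$, $\vy\in\Omega$) do not spoil the smooth inverse. Both are open conditions, so restricting $G$ to the open image set handles them.
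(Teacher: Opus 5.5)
Your proposal is correct and follows the paper's proof essentially verbatim. You recover $\vy$ and $\o$ from the first and last components of $\pi_L$, then $(\vxt)'$ from the fiber component because $A'$ is invertible, then $x_n$ from the $t$-component because $b\neq 0$, and you conclude the immersion property from smoothness of the inverse and the chain rule. Your additional determinant check, $\det \D\pi_L=\pm b\,2^{n-1}\o^{n-1}\det A'\neq 0$, is also correct, as is your remark that $\D_\vx\psi$ never vanishes since $b\neq 0$, which the paper leaves implicit.
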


Note that the center set $S$ is an arbitrary smooth manifold
parameterized as a graph. The condition $b\neq 0$ is important; if
$b=0$, then the manifolds of integration are cylindrical surfaces, and
therefore, they only detect wavefront conormal to the axis of the
cylinder, the $x_n-$axis. The Bolker condition does not hold since
$x_n$ cannot be determined from $\pi_L$, as one can see from equation
\eqref{PiL general 1}.

  \begin{proof} Using the notation in the theorem, we see that the
phase function  of our transform is  \[\psi(\vy,t,\vx, \omega) = 
(t-(\vxtt)'A'(\vxt)' -b(x_n-q))\omega,\] and our surface is given by
$\psi(\vy,t, \vx, \omega) = 0$. Using this phase function, we see that the canonical relation is
\[C = \sparen{\paren{\vy,t, \omega(2 A'(\vxt)' +b\nabla q),\omega; \vx,
\omega\begin{pmatrix}2A'(\vxt)'\\ b\end{pmatrix}}\st \vy\in S,\
t=(\vxtt)'A'(\vxt)'  +b(x_n-q),
\ \omega \neq 0 }.\] In coordinates $(\vy,\vx,\omega)$, we see
\bel{PiL general 1}
\pi_{L}(\vy,\vx,\omega) = (\vy,(\vxtt)'A'(\vxt)' +b(x_n-q),
\omega(2A'(\vxt)' + b\nabla q), \omega)=:(\vy,t,\vz,\tau)\in
T^*(\Omega \times \rr).\ee

We now explain why $\pi_L$ is injective. The first coordinate of this
projection gives $\vy$ and therefore $q(\vy)$ and $\nabla q(\vy)$. The
last coordinate, $\tau$, specifies $\omega$, and the second to last
($\vz$ coordinate) gives $(\vxt)'$ since $A'$ is invertible. Finally,
the value of $x_n$ is given by the $t$ coordinate since $b\neq 0$.
This shows $\pi_L$ is injective.

The formulas that determine $\vx',\ x_n$, and $\omega$ from
coordinates $(\vy,t,\vz,\tau)$ are all smooth functions, so
$\pi_L^{-1}$ is also smooth. Then, $\pi_L$ is an immersion since
$\D\pi_L^{-1}\D \pi_L = I_n$ by the chain rule. Therefore, this transform
satisfies the Bolker condition.

\end{proof}

\begin{remark}[The case $k<n-1$]\label{rem:k<n-1} Now let $k\in \{1,\dots,(n-2)\}$. 
We get a completely different result. Following the notation at the
start of this section, including \eqref{more general},
and using arguments  similar to those in the proof of Theorem
\ref{thm:k=n-1}, we get
\[\pi_L(\vy,\vx,\omega) = (\vy,(\vxtt)'A'(\vxt)' +\vb^T[\vx'' -
(y_{k+1},\dots,y_{n-1},q)], \omega(\vw+b_n\nabla q) , \omega)
\]
where $\vw=\paren{2\paren{A'(\vxt)'}^T,b_{k+1}, \dots, b_{n-1}}^T$. As in
proof of the theorem, $\vy, \omega$ and $\vx'$ can be determined from
this projection.

Next, $\vb^T(\vxt)''=t-(\vxtt)'A'(\vxt)'$ is determined from the $t$
coordinate of the projection. Since $k<n-1$, this does not determine
$\vx''$ and the Bolker condition fails. For this same reason, the
projection $\pi_L$ from the canonical relation drops rank by $n-k-1$
since $\vb\neq \zero$. 
Therefore, if $k<n-1$, then the transform can be quite degenerate. 
\end{remark}

\begin{example}
 To show what happens in a specific case, we go through the
 calculations for a paraboloid in $\rn: (x_1-y_1)^2+(x_2-y_2)^2+\dots +
 (x_{n-1}-y_{n-1})^2-(x_n-q)=t$ (in the notation of  \eqref{k=n-1}, $A'=I_{n-1}$ and
 $b=-1$). In this case, the phase function $\psi$ becomes
 \[
 \begin{aligned} 
 \psi(\vy, t,\vx, \omega)=&(t-(\vxtt)'(\vxt)'+ (x_n-q(\vy)))\omega\\ 
 &\ \ \ =(t-(x_1-y_1)^2-(x_2-y_2)^2-\dots -( x_{n-1}-y_{n-1})^2+(x_n-q(\vy)))\omega.
 \end{aligned} 
 \]
 The canonical relation in this case is 
 \[
 \begin{aligned} 
 C=&\Bigg{\{}\Big{(}\vy,
 \sum\limits_{i=1}^{n-1}(x_i-y_i)^2-(x_n-q(\vy)), \omega(2
 (x_1-y_1)-q_1', 2 (x_2-y_2)-q_2', \cdots, 2(x_{n-1}-y_{n-1})-q_{n-1}'),\omega; \\
 &\qquad\vx,  \o(2(x_1-y_1),\cdots,2(x_{n-1}-y_{n-1}, -1)\Big{)}\Bigg{\}}
 \end{aligned}
 \] 
 and $\pi_L$ becomes:
 \[
 \begin{aligned} 
 \pi_L(\vy, \vx, \omega)=(&\vy, \sum_{i=1}^{n-1}
 (x_i-y_i)^2-(x_n-q(\vy)),\\ & \omega(2(x_1-y_1)-q_1', 2 (x_2-y_2)-q_2',
 \cdots, 2 (x_{n-1}-y_{n-1})-q_{n-1}'),\omega).
 \end{aligned}
 \] Then, it is easy to see that $\vy$ is given by the first
 coordinate of $\pi_L$, $\omega$ is given by the last
 coordinate, $\vx'$ is given by the second-to-last coordinate, and
 $x_n$ is given by the $t-$coordinate.  One finishes the proof as in
the proof of  Theorem 
 \ref{thm:k=n-1}.

 The same analysis holds for any surface of the form $x_1^2 \pm x_2^2
\pm \dots \pm x_{n-1}^2-x_n=t$. \end{example}

\section{The normal operator $\Rc^*\Rc$}\label{sec:normal-operator}

\par Now, we consider the normal operator $\Rc^*\Rc$ from the previous
cases. In some cases that $\Rc^*$ cannot be composed with $\Rc$
without a cutoff $\varphi$ in the middle: $\Rc^*\varphi R$, for example, in some cases when $S$  is unbounded. In such cases, we will let $\Rc^*$ be the
formal dual multiplied on the right by $\varphi$.

We give a brief summary about the $I^{p,l}$ classes. They were
first introduced by Melrose and Uhlmann \cite{MU}, Guillemin and Uhlmann \cite{Guh}
and Greenleaf and Uhlmann \cite{GU1990a}.

 \begin{definition} [] Two submanifolds $M$ and $N$
intersect {\it cleanly} if $M \cap N$ is a smooth submanifold and $T(M
\cap N)=TM \cap TN$.
\end{definition}

\begin{definition} \cite{Guh}
$S^{p,l}(m,n,k)$ is the set of all functions $a(z;\xi,\sigma) \in
C^{\infty} (\rr^m \times \rr^n \times \rr^k )$ such that for every $K
\subset \rr^m$ and every $\alpha \in \zz^m_+, \beta \in \zz^n_+, \delta \in
\zz^k_+$ there is $c_{K, \alpha, \beta}$ such that
\[|\partial_z^{\alpha}\partial_{\xi}^{\beta}\partial_{\sigma}^{\delta}
a(z,\xi,\sigma)| \le c_{K,\alpha,\beta}(1+ |\xi|)^{p- |\beta|} (1+
|\sigma|)^{l-| \delta|}\] for all $(z,\xi,\tau) \in K \times
\rr^n \times \rr^k$.
\end{definition}

\begin{example} Let
$\tilde{\Lambda}_0=\Delta_{T^*\rr^n}=\{ (x, \xi; x, \xi)\st x \in \rr^n, \
\xi \in \rr^n \setminus 0 \}$ be the diagonal in $T^*\rr^n \times T^*\rr^n$
and let $\tilde{\Lambda}_1= \{ (x', x_n, \xi', 0; x', y_n, \xi', 0)\st
x' \in \rr^{n-1}, \ \xi' \in \rr^{n-1} \setminus 0 \}$.  Then,
$\tilde{\Lambda}_0$ intersects $ \tilde{\Lambda}_1$ cleanly in
codimension $1$.
\end{example}

\begin{definition}\cite{Guh}   Let $I^{p,l}(\tilde{\Lambda}_0,
\tilde{\Lambda}_1)$ be the set of all distributions $u$ such that
$u=u_1 + u_2$ with $u_1 \in C^{\infty}_0$ and $$u_2(x,y)=\int
e^{i((x'-y')\cdot \xi'+(x_n-y_n-s) \cdot \xi_n+ s \cdot \sigma)}
a(x,y,s; \xi,\sigma)d\xi d\sigma ds$$ with $a \in S^{p',l'}$ where
$p'=p-\frac{n}{2}+\frac{1}{2}$ and $l'=l-\frac{1}{2}$.
\end{definition}

Let $\Lambda_0$ and $\Lambda_1$ be Lagrangian
submanifolds of the product space $T^*X \times T^*Y$ which intersect cleanly. Then, there is microlocally, a canonical transformation $\chi$ which maps
$(\Lambda_0, \Lambda_1)$ into $(\tilde{\Lambda}_0,
\tilde{\Lambda}_1)$. Thus, we define the $I^{p,l}(\Lambda_0, \Lambda_1)$ class for
any two cleanly intersecting Lagrangians in codimension $1$ as:

\begin{definition} \cite{Guh} Let  $I^{p,l}( \Lambda_0, \Lambda_1)$
be the set of all distributions $u$ such that $ u=u_1 + u_2 + \sum
v_i$ where $u_1 \in I^{p+l}(\Lambda_0 \setminus \Lambda_1)$, $u_2 \in
I^{p}(\Lambda_1 \setminus \Lambda_0)$, the sum $\sum v_i$ is locally
finite and $v_i=Aw_i$ where $A$ is a zero-order FIO associated to
$\chi ^{-1}$, the canonical transformation from above, and $w_i \in
I^{p,l}(\tilde {\Lambda}_0, \tilde{\Lambda}_1)$.
\end{definition}
\begin{theorem}
\cite{GU1990a} If $u \in D'(X \times Y )$ then $u \in I^{p,l}(\Lambda_0,\Lambda_1)$ if there is an $s_0 \in R$ such that for all first order pseudodifferential operators $ P_i$ with principal symbols vanishing on $\Lambda_0 \cup \Lambda_1$, we have $P_1P_2 . . . P_ru \in H_{loc}^{s_0}$.
\end{theorem}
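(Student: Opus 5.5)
The plan is to reduce, microlocally, to the model pair $(\tilde{\Lambda}_0,\tilde{\Lambda}_1)$ and then check the iterated regularity condition there directly, in the spirit of Hörmander's and Melrose's characterization of Lagrangian distributions by iterated regularity.

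\textbf{Step 1: reduction to the model.} I would first use a microlocal partition of unity. Away from $\Lambda_0\cap\Lambda_1$ only one of the two Lagrangians is present. There the hypothesis reduces to the classical iterated regularity characterization of $I^{m}(\Lambda)$, which places the piece in $I^{p+l}(\Lambda_0\setminus\Lambda_1)$ or in $I^{p}(\Lambda_1\setminus\Lambda_0)$, with orders fixed by $s_0$. Near a point of the intersection, I would take the canonical transformation $\chi$ carrying $(\Lambda_0,\Lambda_1)$ to $(\tilde{\Lambda}_0,\tilde{\Lambda}_1)$ and quantize it by an elliptic zero-order FIO $F$. By Egorov's theorem, $F P F^{-1}$ is a first-order $\Psi$DO whose principal symbol is $\sigma(P)\circ\chi^{-1}$, so it vanishes on $\tilde{\Lambda}_0\cup\tilde{\Lambda}_1$. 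Since $F$ preserves $H^{s}_{loc}$ microlocally, $Fu$ satisfies the same iterated regularity with respect to the model pair. By the definition of $I^{p,l}(\Lambda_0,\Lambda_1)$ via $\chi^{-1}$, it is therefore enough to prove the result for $\tilde{\Lambda}_0,\tilde{\Lambda}_1$.

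\textbf{Step 2: the model ideal.} In the model, write $(x,\xi;y,\eta)$ and determine the ideal of symbols vanishing on $\tilde{\Lambda}_0\cup\tilde{\Lambda}_1$. Both Lagrangians lie in $\{x'=y',\ \xi'=\eta',\ \xi_n=\eta_n\}$. On $\tilde{\Lambda}_0$ one also has $x_n=y_n$, and on $\tilde{\Lambda}_1$ one has $\xi_n=0$. The first-order generators are therefore $x_j-y_j$ with a $|\xi|$ weight, $\xi_j-\eta_j$, $\xi_n-\eta_n$, and $(x_n-y_n)\xi_n$.

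Iterated regularity under the first three families says that $u$ is conormal to the diagonal in the primed variables. After a partial Fourier transform, I can then write
\[ u(x,y)=\int e^{i((x'-y')\cdot\xi'+(x_n-y_n)\xi_n)}\,b(x,y;\xi)\,d\xi, \]
with $b$ a classical-type symbol in $\xi'$. The remaining generator, $(x_n-y_n)\xi_n$, acts on $b$ essentially as $\xi_n\partial_{\xi_n}$.

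\textbf{Step 3: symbol estimates (main obstacle).} The key step is to show that stable regularity under arbitrary powers of $\xi_n\partial_{\xi_n}$ and $\xi'\partial_{\xi'}$ yields product-type symbol estimates. I would introduce $\sigma$ dual to the extra variable $s$ and rewrite $b$ in the form $\int e^{i(-s\xi_n+s\sigma)}a\,ds\,d\sigma$ with $a\in S^{p',l'}$. Concretely:
\begin{enumerate}
\item Make a dyadic decomposition in $|\xi|$ and in $|\xi_n|/|\xi|$.
\item Use Sobolev embedding on each piece to turn the uniform $L^2$ bounds coming from $P_1\cdots P_r u\in H^{s_0}_{loc}$ into pointwise bounds of the form $(1+|\xi|)^{p'-|\beta|}(1+|\sigma|)^{l'-|\delta|}$.
\item Reassemble the dyadic pieces, keeping track of the order shifts $p'=p-\frac n2+\frac12$ and $l'=l-\frac12$.
\end{enumerate}
I expect the main difficulty here: the region $|\xi_n|\ll|\xi|$, where the two Lagrangians meet and the weights must be split correctly between $p$ and $l$. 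What I would try first is to treat $\xi_n/|\xi'|$ as a conormal variable to $\{\xi_n=0\}$, mirroring the Melrose–Uhlmann construction. Pulling the resulting model representation back through $F^{-1}$ and summing the local pieces then gives $u\in I^{p,l}(\Lambda_0,\Lambda_1)$.
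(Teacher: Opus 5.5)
The paper gives no proof of this statement; it is quoted from Greenleaf--Uhlmann, so your outline has to stand on its own. Steps 1 and 2 are sound in outline: microlocalization, conjugation by an elliptic zero-order FIO via Egorov's theorem, and the ideal generated by $|\xi|(x_j-y_j)$ ($j<n$), $\xi'-\eta'$, $\xi_n-\eta_n$, $(x_n-y_n)\xi_n$.

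Two smaller points come first.
\begin{enumerate}
\item As written, $p$ and $l$ are not tied to $s_0$. What can actually be proved is $u\in I^{p,l}$ for \emph{some} $(p,l)$ determined by $s_0$, and you should produce these orders explicitly rather than leave them implicit. The classes increase in $p$ and $l$, and that is also how the pieces from Step 1 are made compatible with those near $\Lambda_0\cap\Lambda_1$.
\item On the symbol side, the generators $|\xi|(x_j-y_j)$ give $\langle\xi\rangle\partial_{\xi_j}$ for every $j<n$, not just the Euler field $\xi'\cdot\partial_{\xi'}$. You need all of them for the $\xi'$-estimates.
\end{enumerate}

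The real gap is in Step 3, exactly where you flag it. In the $\xi_n$-direction the module supplies only $\xi_n\partial_{\xi_n}$, which degenerates at $\xi_n=0$. Suppose you decompose homogeneously in $|\xi_n|/|\xi|$, or equivalently treat $\xi_n/|\xi'|$ as a conormal variable. Sobolev embedding on a piece $\{|\xi|\sim 2^j,\ |\xi_n|\sim 2^k\}$, of volume $\sim 2^{j(n-1)+k}$, gives $|b|\lesssim\langle\xi\rangle^{p'}|\xi_n|^{-1/2}$, with $p'$ determined by $s_0$ and $n$. This bound persists as $2^k\to 0$, so it blows up at $\xi_n=0$, and you get no control of $\partial_{\xi_n}b$ there. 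That is not of the form $(1+|\xi|)^{p'-|\beta|}(1+|\sigma|)^{l'-|\delta|}$, because in $S^{p',l'}$ the variable $\sigma$ (which becomes $\xi_n$) is measured at unit scale, not relative to $|\xi|$.

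The missing ingredient is locality, not the module. After the cutoffs of Step 1, $u$ has compact support in $x_n-y_n$. Then $\partial_{\xi_n}$ acting on $b$ corresponds, up to a constant, to multiplying $u$ by the bounded function $x_n-y_n$. This is a zero-order operator whose commutators with the $P_i$ are zero-order, so every $\partial_{\xi_n}^k b$ obeys the same $L^2$ bounds as $b$. Use this in the strip $|\xi_n|\le 1$. For $1\le|\xi_n|\lesssim|\xi|$, decompose dyadically in $|\xi_n|$ itself; there $\xi_n\partial_{\xi_n}$ is comparable to $\langle\xi_n\rangle\partial_{\xi_n}$. You then get $|\partial^\alpha_{\xi'}\partial^k_{\xi_n}b|\lesssim\langle\xi\rangle^{p'-|\alpha|}\langle\xi_n\rangle^{-1/2-k}$. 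So the split between $p$ and $l$ that worried you is forced: $l'=-1/2$.

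You also still have to construct the $(s,\sigma)$ representation. Take $a=(2\pi)^{-1}\phi(s)\,b(x,y;\xi',\sigma)\,\chi$, with $\phi\equiv 1$ on the range of $x_n-y_n$ and $\chi$ a cutoff to $|\sigma-\xi_n|\le|\xi|/2$. Then $a\in S^{p',-1/2}$. With $\chi\equiv1$ this reproduces $u$ exactly, since the $\xi_n$-integral forces $s=x_n-y_n$. The $(1-\chi)$ part, where $|\sigma-\xi_n|\gtrsim|(\xi,\sigma)|$, is smoothing after integrating by parts in $s$.
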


\par The composition $\Rc^*\Rc$ depends on the singularities of $\pi_L$
and $\pi_R$. In Sections \ref{sec:definite}, \ref{sec:k=1 diag 1}, and \ref{sec:k=1 cylinder}, we showed that both projections
have fold singularities. Then, by \cite{RF1}, \cite{Nolan-Fold2000}, $\Rc^*\Rc \in I^{p,l}(\Delta, C)$
where $C$ is another two-sided fold. In sections \ref{sec:k>1 indefinite} and \ref{sec:k>1 cylinder},  we have that $\pi_L$ has a fold singularity and $\pi_R$ has a cusp singularity. In this case, no result for $\Rc^*\Rc$ is known, and this is part of future work. But we would like to note that, in this case, the distribution class associated to $\Rc\Rc^*$ is known from \cite{FN:fold/cusp}. It is an operator with the kernel in the class $
I^m(\Delta, \tilde{C})$ where $\tilde{C}$ is an open umbrella. 
Finally, in section \ref{sec:paraboloids}, we showed that the projections do not drop rank, and satisfy the Bolker condition, thus $\Rc^*\Rc$ is a pseudodifferential operator. 

\section{Acknowledgements}\label{sec:acknowledgements}
The authors began this research as a part of a collaborate@ICERM program at Brown University in 2024.  We thank ICERM for its hospitality and for providing pleasant, productive working conditions. ETQ acknowledges partial support of the Simons Foundation under grant  [MPS-TSM-00708556, ETQ]. VPK acknowledges the support of the Department of Atomic Energy,  Government of India, under
Project No.\ 12-R\&D-TFR-5.01-0520.

\bibliographystyle{plain}
%\bibliography{references}

\end{document}